\definecolor{red}{rgb}{1,0.1,0.1}
\definecolor{blue}{rgb}{0.1,0.1,1}
\definecolor{vb}{RGB}{160,32,240}
\theoremstyle{plain}
\newtheorem*{teo*}{Theorem}
\newtheorem*{prop*}{Proposition}
\newtheorem*{lema*}{Lemma}
\numberwithin{equation}{section}
\newtheorem{teo}{Theorem}[section]
\newtheorem{lema}[teo]{Lemma}
\newtheorem{prop}[teo]{Proposition}
\theoremstyle{remark}
\newtheorem{obs}[teo]{Remark}
\theoremstyle{definition}
\newtheorem*{mydef*}{Definition}
\newcommand{\R}{\mathbb{R}^n}
\newcommand{\N}{\mathbb{N}}
\newcommand{\A}{\mathcal{A}}
\begin{document}
\title[Fractional operator with $L^{\alpha,s}$-H\"ormander conditions]{Sharp bounds for fractional operator with $L^{\alpha,s}$-H\"ormander conditions}




\author[G.~H.~Iba\~{n}ez~Firnkorn]{Gonzalo H. Iba\~{n}ez-Firnkorn}
\address{G.~H.~Iba\~{n}ez~Firnkorn\\ FaMAF \\ Universidad Nacional de C\'ordoba \\
CIEM (CONICET) \\ 5000 C\'ordoba, Argentina}
\email{gibanez@famaf.unc.edu.ar}

\author[M.~S.~Riveros]{Mar\'{\i}a Silvina Riveros}
\address{M.~S.~Riveros \\ FaMAF \\ Universidad Nacional de C\'ordoba \\
CIEM (CONICET) \\ 5000 C\'ordoba, Argentina}
\email{sriveros@famaf.unc.edu.ar}

\author[R.~E.~Vidal]{Ra\'ul E. Vidal}
\address{R.~E.~Vidal \\ FaMAF \\ Universidad Nacional de C\'ordoba \\
CIEM (CONICET) \\ 5000 C\'ordoba, Argentina}
\email{vidal@famaf.unc.edu.ar}

\thanks{The authors are  partially supported by
CONICET and SECYT-UNC}

\subjclass[2010]{42B20, 42B25}

\keywords{Fractional operators, fractional $L^{s}$-H\"ormander's
	condition, sharp weights inequalities, sparse operators.
}

\maketitle

\begin{abstract}
In this paper 
 we provide the sharp bound
for a fractional type
o\-pe\-ra\-tor given by a kernel satisfying  the
$L^{\alpha,s}$-H\"ormander condition and fractional size
condition,   $0<\alpha<n$ and $1< s\leq \infty$. To prove this
result we use a new appropriate sparse domination
provided in this work. Examples of these operators 
include the fractional rough operators. For
 the case $s=\infty$ we recover  the sharp
 bound of the fractional integral, $I_{\alpha}$, proved in
[Lacey, M. T., Moen, K., P\'erez, C., Torres, R. H. (2010). Sharp
weighted bounds for fractional integral operators. Journal of
Functional Analysis, 259(5), 1073-1097].
\end{abstract}

\maketitle

\section{Introduction and main results}

Let $0<\alpha<n$, the fractional integral operator $I_{\alpha}$ on
$\R$ is defined by
$$I_{\alpha}f(x):=\int_{\R}\frac{f(y)}{|x-y|^{n-\alpha}}dy.$$
This operator is bounded from $L^p(dx)$ into $L^q(dx)$ provided
that $1<p<\frac{n}{\alpha}$ and
$\frac1{q}=\frac1{p}-\frac{\alpha}{n}$ (see \cite{LibroStein} for
this result).

In the study of weighted estimates for the fractional integral, the class of weights con\-si\-de\-red is the $A_{p,q}$  introduced
 by Muckenhoupt and Wheeden  in \cite{MW74}. Recall that $w$ is a weight if it  is a non-negative locally integrable  function.  Given $1<p<q<\infty$,
the weight $w$ is in the class
  $ A_{p,q}$ if
$$[w]_{A_{p,q}}:=\sup_{Q}\left(\frac1{|Q|}\int_Q w^q\right)\left(\frac1{|Q|}\int_Q w^{-p'}\right)^{q/p'}<\infty.$$
If $w\in A_{p,q}$ then $w^q\in A_{1+q/p'}$ with
$[w^q]_{{1+q/p'}}=[w]_{A_{p,q}}$ and $w^{-p'}\in A_{1+p'/q}$
with $[w^{-p'}]_{{1+p'/q}}=[w]_{A_{p,q}}^{p'/q}$ where $A_s$ denotes
the classical Muckenhoupt class of weights. Observe that $w\in
A_{p,p}$ is equivalent to $w^p\in A_{p}$. The class
$A_{\infty}=\cup_{p\geq 1}A_{p}$, and the statement $w\in
A_{\infty,\infty}$ is equivalent to $w^{-1}\in A_1$.

There have been  several works devoted to the study of quantitative
weighted estimates, in other words, in those papers the authors
study how these estimates depend on the weight constant $[w]_{A_p}$
or $[w]_{A_{p,q}}$.
 The estimate for the Hardy-Littlewood maximal function was studied by Buckley in \cite{B93}. This result attracted renewed attention after the work of Astala, Iwaniec and Saksman \cite{AIS01} on the theory of Quasirregular mapppings. They proved sharp regularity results for solutions to the Beltrami equation, assuming that the operator norm of the Beurling-Ahlfors transform grows linearly in terms of the $A_p$ constant for $p\geq 2$. This linear growth was proved by S. Petermichl and A. Volberg in \cite{PV02}. This result opened up the possibility of considering some other operators.  S. Petermichl  in \cite{P07,P08} proved the corresponding results for the Hilbert transform and the Riesz Transforms.  
 The $A_2$ Theorem,  namely the linear dependence on the $A_2$ constant for
Calder\'on-Zygmund operators, proved by Hyt\"onen in \cite{H12}, can
be considered the most representative in this line. In the case of
fractional integrals, the sharp dependence of the $A_{p,q}$
constants was obtained by Lacey, Moen, P\'erez and Torres
\cite{LMPT10}. The precise statement is the following

\begin{teo}\cite{LMPT10}\label{SharpIalpha}
Let $0<\alpha<n$, $1<p<\frac{n}{\alpha}$ and $\frac1{q}=\frac1{p}-\frac{\alpha}{n}$. If $w\in A_{p,q}$ then
$$\|I_{\alpha}f\|_{L^q(w^q)}\leq c_{n,\alpha}[w]_{A_{p,q}}^{(1-\frac{\alpha}{n})\max\left\{1,\frac{p'}{q}\right\}}\|f\|_{L^p(w^p)},$$
and the estimate is sharp in the sense that the inequality does not hold if we replace the exponent of the $A_{p,q}$ constant by a smaller one.
\end{teo}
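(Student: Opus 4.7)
My plan is to reduce the problem to a fractional sparse operator via pointwise domination, and then obtain the sharp weighted bound for that model by a bilinear duality argument combined with a Carleson-type embedding.

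The first step is a pointwise sparse bound: one can find a finite collection of sparse families $\{\mathcal{S}_j\}$ (from the $3^n$ standard translated dyadic grids) with
\[
|I_\alpha f(x)|\;\lesssim_{n,\alpha}\;\sum_{j} \mathcal{A}_{\alpha,\mathcal{S}_j}f(x),\qquad \mathcal{A}_{\alpha,\mathcal{S}}f(x):=\sum_{Q\in\mathcal{S}}|Q|^{\alpha/n}\langle |f|\rangle_Q\mathbf{1}_Q(x).
\]
This is a Lerner-type decomposition (and also the $r'=\infty$ specialization of the sparse domination that the present paper develops), and it reduces the theorem to proving the corresponding sharp $L^p(w^p)\to L^q(w^q)$ bound for an arbitrary fractional sparse operator $\mathcal{A}_{\alpha,\mathcal{S}}$.

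Next, I would dualize. Introducing the dual weights $\sigma:=w^{-p'}$ and $u:=w^q$ and substituting $f\mapsto f\sigma$, $g\mapsto gu$ in the adjoint pairing, the desired inequality is equivalent to the bilinear estimate
\[
\Lambda(f,g):=\sum_{Q\in\mathcal{S}}|Q|^{\alpha/n-1}\sigma(Q)u(Q)\,\langle f\rangle^\sigma_Q\langle g\rangle^u_Q\;\lesssim\;[w]_{A_{p,q}}^{(1-\alpha/n)\max\{1,p'/q\}}\|f\|_{L^p(\sigma)}\|g\|_{L^{q'}(u)},
\]
where $\langle\cdot\rangle^\sigma_Q$ and $\langle\cdot\rangle^u_Q$ denote the $\sigma$- and $u$-averages over $Q$. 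The $A_{p,q}$ condition controls the coefficient $|Q|^{\alpha/n-1}\sigma(Q)u(Q)$, and a careful H\"older split distributes the $A_{p,q}$ constant between the two factors so that the residual sparse sum becomes a Carleson-type object summable using only $A_\infty$ information. Using
\[
[u]_{A_\infty}\leq[u]_{A_{1+q/p'}}=[w]_{A_{p,q}},\qquad [\sigma]_{A_\infty}\leq[\sigma]_{A_{1+p'/q}}=[w]_{A_{p,q}}^{p'/q},
\]
together with Hyt\"onen--P\'erez-type sharp weighted maximal bounds then yields the claimed exponent, with the $\max\{1,p'/q\}$ arising from whichever of the two $A_\infty$ constants dominates.

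The delicate point, which I expect to be the main obstacle, is extracting the sharp additional factor $1-\alpha/n$: a plain use of the $A_{p,q}$ definition already gives the exponent $\max\{1,p'/q\}$, but capturing the fractional saving requires distributing the power of $[w]_{A_{p,q}}$ so that the remainder sum is controllable by $A_\infty$-type reverse H\"older/Carleson rather than by full $A_{p,q}$ information, and this forces a careful choice of the H\"older exponents in the split (where the scaling $|Q|^{\alpha/n}$ provides exactly the $(1-\alpha/n)$ slack). Sharpness of the resulting exponent is verified by testing on radial power weights $w(x)=|x|^\beta$ with $\beta$ approaching the endpoint of $A_{p,q}$, as in \cite{LMPT10}.
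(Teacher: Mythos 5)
Your plan follows essentially the same route by which the paper recovers this theorem as the $r=1$ case of Theorem \ref{CotaT}: pointwise sparse domination (Theorem \ref{Sparse} with $r'=\infty$), then a sharp weighted bound for the fractional sparse operator obtained by dualizing with $\sigma=w^{-p'}$, $u=w^q$ and splitting the $A_{p,q}$ constant so that the remainder is controlled by $A_\infty$/Carleson information — which is precisely the content of the two-weight bound from \cite{FH18} invoked in the proof of Theorem \ref{cotasharp} — and finally sharpness via radial power weights as in Section 4. The only difference is that you propose to run the bilinear $A_\infty$-embedding argument by hand where the paper cites \cite{FH18} as a black box, so the approaches coincide in substance.
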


The Calder\'on-Zymund integral operators can be generalized  by  taking
other regularity condition of the kernel, for example $L^{r'}$-H\"ormander
condition. This integral operators  are controlled in the $L^p$-norm
sense by the maximal operator $M_r$,  defined by $L^r$-average.
 For
more details see, for example, \cite{LR05}, \cite{MPTG05}. The
operator $I_{\alpha}$ can be generalized in  an analogous way by adding
an assumption of boundedness, as in \cite{K90}, or adding some
fractional size condition, as in \cite{BLR11}.

Now we give the definitions of the fractional size and H\"{o}rmander
conditions. First we introduce some notation, we set
$$\|f\|_{s,B}=
\left( \frac1{|B|}\int_B |f|^s\right)^{1/s},$$  where $B$ is a
ball. Observe that in this averages the balls $B$ can be replaced
by cubes $Q$. The notation  $|x|\sim t$ means $t < |x|\leq 2t$ and
we write
$$\|f\|_{ s,|x|\sim t}=\|f \chi_{|x|\sim
t}\|_{s,B(0,2t)}.$$

Let 
$0< \alpha < n$ and $1\leq s\leq \infty$.
The function $g$ is said to satisfy the fractional size condition, 
 $g\in S_{\alpha,s}$, if there exists a constant $C>0$ such that
$$\|g\|_{s,|x| \sim t} \leq C t^{\alpha - n}.$$

For $s=1$ we write $S_{\alpha,s}=S_{\alpha}$. Observe that if
$K_{\alpha}\in S_{\alpha}$, then there exists a constant $c>0$ such
that
$$\int_{|x|\sim t} |g(x)|dx\leq c t^{\alpha}.$$


The function  $h$ satisfies the $L^{\alpha,s}$-H\"ormander
condition ($h \in H_{\alpha,s}$), if there exist $c_{s}>1$
and $C_{s}>0$ such that for all $x$ and $R>c_{s}|x|$,
 \begin{align*} \sum_{m=1}^{\infty} (2^mR)^{n- \alpha}  \| h(\cdot - x) - h(\cdot)\|_{s,|y|\sim2^mR} \leq C_{s}.\end{align*}
We say that $h \in H_{\alpha,\infty}$ if $h$ satisfies the previous condition with $\|\cdot\|_{L^{\infty},|x|\sim 2^mR}$ in place
 of $\|\cdot\|_{s,|x|\sim 2^mR}$.
For $\alpha =0$ we write $H_{0, s}=H_{s}$, the classical
$L^s$-H\"ormander condition.

Observe  that if  $K_{\alpha}(x)=|x|^{n-\alpha}$ then
 $T_{\alpha}=I_{\alpha}$ the fractional integral and $K_\alpha\in S_{\alpha, \infty}\cap H_{\alpha, \infty}$.

In this paper we consider the following fractional operator. Let
$0<\alpha<n$,  $1\leq r<\infty$ and $r'$  be the conjugated exponent of
$r$. Let $K_{\alpha}$ be a measurable function defined away from
$0$,
such that $K_\alpha\in S_{\alpha,r'}\cap H_{\alpha,r'}$.  For any 
$f \in L^{\infty}_c(dx)$, we consider the operator


\begin{equation}\label{defT}
T_{\alpha}f(x)=\int K_\alpha(x-y)f(y)dy.
\end{equation}


Observe that we do not assume that the operator is bounded.

\begin{obs}Let $1<r<p<n/\alpha$ and $\frac1{q}=\frac1{p}-\frac{\alpha}{n}$. If $f\in L^{\infty}_c(dx)$ and $w^r\in A_{\frac{p}{r},\frac{q}{r}}$ then $T_\alpha f\in L^q(w^q)$.
This remark is a particular case of the Lemma 5.2 in \cite{IFRi18}.
\end{obs}
\begin{obs}This type of operators also appears in several works, for example \cite{BLR11,DPR18,G17}.
\end{obs}


\begin{obs} It can be considered  
that $T_{\alpha}$ is not of convolution type. In this case, we need the H\"ormander and size condition in both variables.
In this paper, we only consider the convolution type operator  and
the general case follows in an analogous way.
\end{obs}

 An intersting example of kernel 
is the following, let consider $L=-\Delta + V$  the Schr\"odinger operator on $\R$, $n\geq 3$, with $V$ satisfies a Reverse-H\"older condition, $RH_q$, with $\frac{n}{2}<q<n$ and let $K$ be the kernel associated to the Riesz transform $L^{-1/2}\nabla$. It can be prove that $K\in S_{0,r'}\cap H_{r'}$, for some $1<r'<\infty$, see details in \cite{GLP08,ACADH17,Li16}. We define $K_{\alpha}(x,y)=|x-y|^{\alpha}K(x,y)$, then by \cite{BLR11}, $K_\alpha\in S_{\alpha,r'}\cap H_{\alpha,r'}$.\\

For $0\leq\alpha<n$, $1\leq r <\infty$ and $f\in
L^1_{\text{loc}}(dx)$, the $M_{\alpha,r}$ maximal operator is
defined by
$$M_{\alpha,r}f(x)=\sup_{B\ni x} |B|^{\alpha/n}\|f\|_{r,B},$$
where the supremum is taken over all the balls $B$ that contains
$x$.

 A more general case of this type  of operator has been studied
by Kurtz in \cite{K90}. We state his results only for the case  we
are considering,
\begin{teo}\cite{K90}\label{KMsharp}
Let $0<\alpha < n$ and $1\leq r<n/\alpha$. Let $K_\alpha\in
H_{\alpha,r'}$  and  suppose $T_\alpha$ is bounded from $L^{r}(dx)$
into $L^q(dx)$ for $\frac1{q}=\frac1{r}-\frac{\alpha}{n}$. Then
there exists a constant $C>0$, such that for  $f\in
L^r_{\text{loc}}(dx)$,
$$M^{\sharp}(T_{\alpha}f)(x)\leq C M_{\alpha,r}f(x),$$
where $M^{\sharp}$ is the classical sharp maximal function.
\end{teo}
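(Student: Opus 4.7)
The plan is to estimate the mean oscillation of $T_\alpha f$ on an arbitrary ball $B\ni x$ and bound it by a constant times $M_{\alpha,r}f(x)$. The standard decomposition splits $f = f_1 + f_2$ with $f_1 = f\chi_{\beta B}$ for a dilate $\beta \geq 2 c_{r'}$, and selects the oscillation constant $c_B := T_\alpha f_2(x_B)$ (finite because $f_2$ vanishes near $x_B$ and $K_\alpha$ is controlled away from the origin by $S_{\alpha,r'}$). The triangle inequality reduces matters to controlling the local piece $\mathrm{I} = \frac{1}{|B|}\int_B |T_\alpha f_1|$ and the non-local piece $\mathrm{II} = \frac{1}{|B|}\int_B |T_\alpha f_2(y) - T_\alpha f_2(x_B)|\,dy$.

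For $\mathrm{I}$, Jensen's inequality (valid since $q\geq 1$) together with the hypothesis $T_\alpha : L^{r'} \to L^q$ yields
\[
\mathrm{I} \leq |B|^{-1/q}\|T_\alpha f_1\|_{L^q} \leq C\,|B|^{-1/q}\|f\chi_{\beta B}\|_{L^{r'}}.
\]
Using $1/r' - 1/q = \alpha/n$ and rewriting the $L^{r'}$ norm as a normalised local norm over $\beta B$, this becomes $C|B|^{\alpha/n}\|f\|_{r',\beta B}$, which is dominated by $C\,M_{\alpha,r}f(x)$ after a Hölder step comparing the averaging indices $r'$ and $r$.

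For $\mathrm{II}$, for $y\in B$ and $v = y-x_B$ (so $|v|\leq R_0$) write
\[
T_\alpha f_2(y) - T_\alpha f_2(x_B) = \int_{|u|>\beta R_0}\bigl(K_\alpha(u+v)-K_\alpha(u)\bigr) f(x_B-u)\,du,
\]
decompose the domain into dyadic annuli $A_m = \{|u|\sim 2^m c_{r'} R_0\}$, $m\geq 1$, and apply Hölder on each annulus with the conjugate pair $(r',r)$. Expressed through the normalised local norms of the paper, this gives
\[
|T_\alpha f_2(y)-T_\alpha f_2(x_B)| \leq c_n\sum_{m \geq 1} (2^m c_{r'} R_0)^{n}\,\|K_\alpha(\cdot+v)-K_\alpha(\cdot)\|_{r',A_m}\,\|f\|_{r,B_m},
\]
where $B_m = B(x_B,2^{m+1}c_{r'}R_0)\ni x$. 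Since $\|f\|_{r,B_m} \leq M_{\alpha,r}f(x)\,|B_m|^{-\alpha/n}$, the factors $(2^m R_0)^n$ and $|B_m|^{-\alpha/n}$ combine to $(2^m R_0)^{n-\alpha}$, reducing the series to the $L^{\alpha,r'}$-H\"ormander sum applied with $x = -v$ and $R = c_{r'}R_0 > c_{r'}|v|$, which is bounded by $C_{r'}$.

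The main technical difficulty is the non-local estimate: one must arrange the Hölder pairing and the dyadic accounting so that the $|B|^{\alpha/n}$-weighting inherent to $M_{\alpha,r}$ and the $(2^mR_0)^{n-\alpha}$-weighting of the $H_{\alpha,r'}$ condition land on exactly the same series. Once both pieces give $\mathrm{I}+\mathrm{II}\leq C\,M_{\alpha,r}f(x)$, taking the supremum over balls $B\ni x$ concludes the proof.
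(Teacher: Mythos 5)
Your non-local estimate is sound: the change of variables, the decomposition into the annuli $|u|\sim 2^m c_{r'}R_0$, the H\"older pairing $(r',r)$ on each annulus, and the bookkeeping that merges $(2^m R_0)^{n}$ with $|B_m|^{-\alpha/n}$ into the $(2^m R_0)^{n-\alpha}$ weights of the $H_{\alpha,r'}$ sum is exactly the computation this paper itself carries out in the proof of Lemma \ref{LemaSparse} (the paper does not prove the quoted theorem; it is cited from \cite{K90}, so that lemma is the closest in-paper comparison).

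The local term, however, contains a genuine gap. From the assumed $L^{r'}\to L^{q}$ boundedness you get $\mathrm{I}\lesssim |B|^{\alpha/n}\|f\|_{r',\beta B}$, i.e.\ control by the normalised $L^{r'}$-average of $f$, and you then pass to $M_{\alpha,r}f(x)$ ``after a H\"older step comparing the averaging indices $r'$ and $r$''. Normalised local norms are increasing in the exponent, so $\|f\|_{r',\beta B}\le \|f\|_{r,\beta B}$ holds only when $r'\le r$, that is $r\ge 2$; for $1\le r<2$ (which includes the model situation $r$ near $1$, i.e.\ the $I_\alpha$ case) H\"older goes the other way, and what your argument actually proves is a bound by $M_{\alpha,r'}f(x)$, not $M_{\alpha,r}f(x)$. (For small $r$ the relation $\frac1q=\frac1{r'}-\frac\alpha n$ does not even yield a positive $q$, a sign that the exponents in your local step are mismatched.) The way to close the gap is to run the local estimate with the boundedness of $T_\alpha$ from $L^{r}$ into $L^{s}$, $\frac1s=\frac1r-\frac\alpha n$ (meaningful precisely because $r<n/\alpha$, and this is the boundedness that actually accompanies the $M_{\alpha,r}$ control in Kurtz's scheme): then $\mathrm{I}\le |B|^{-1/s}\|T_\alpha f_1\|_{L^s}\lesssim |B|^{-1/s}\|f\|_{L^r(\beta B)}\simeq |B|^{\alpha/n}\|f\|_{r,\beta B}\le c\,M_{\alpha,r}f(x)$, with no exponent comparison at all; alternatively one can treat the local piece via a weak-type estimate and Kolmogorov's inequality. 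A minor additional point: the finiteness of your choice $c_B=T_\alpha f_2(x_B)$ is not guaranteed for $f\in L^1_{\mathrm{loc}}$ by the size condition alone (which, moreover, is not among the hypotheses of the quoted statement); the standard remedies are to argue for bounded, compactly supported $f$ and pass to the limit, or to take $c_B$ to be the average of $T_\alpha f_2$ over $B$.
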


\begin{obs} In \cite{K90}, Kurtz defined a class of kernels $K(r,\alpha)$, it is easy to see that if $K_{\alpha}\in K(r',\alpha)$ then
$K_{\alpha}\in H_{\alpha,r'}$ and the operator $T_{\alpha}
f=K_{\alpha}*f$ is bounded from $L^{r}(dx)$ into $L^q(dx)$ for
$\frac1{q}=\frac1{r}-\frac{\alpha}{n}$.
\end{obs}

\begin{teo}\cite{K90}
Let $0<\alpha < n$ and $1\leq r<n/\alpha$. Let $K_\alpha\in
H_{\alpha,s}$ and suppose  $T_\alpha$ is bounded from $L^{s}(dx)$
into $L^q(dx)$  
 for all $(s,q)$ with $\frac1{q}=\frac1{s}-\frac{\alpha}{n}$ and $n/(n-\alpha)<s<r'$.
If $r<p<n/\alpha$,
$\frac1{q}=\frac1{p}-\frac{\alpha}{n}$ and $w^r\in
A_{\frac{p}{r},\frac{q}{r}}$, then there exists a constant $C_w$,
independent of $f$ but depending on $w$, such that
\begin{equation}\label{fuerteKurtz}
\|T_{\alpha}f\|_{L^q(w^q)}\leq C_w \|f\|_{L^p(w^p)}.
\end{equation}
\end{teo}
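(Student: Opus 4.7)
The plan is to combine Kurtz's pointwise sharp maximal estimate $M^{\sharp}(T_{\alpha}f)\leq C\,M_{\alpha,\rho}f$ (from the first Kurtz theorem recalled above) with the Fefferman--Stein inequality and the Muckenhoupt--Wheeden two-weight boundedness of the fractional maximal operator. The first Kurtz theorem supplies such a pointwise bound with parameter $\rho$, but only when $\rho'\in(n/(n-\alpha),r')$; in particular one needs $\rho$ \emph{strictly} larger than $r$. Since our hypothesis only gives $w^{r}\in A_{p/r,q/r}$, not $w^{\rho}\in A_{p/\rho,q/\rho}$, the self-improvement (openness) of Muckenhoupt's class will be the glue that ties everything together.

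Concretely, I would proceed as follows. \textbf{(i)} Using the equivalence $w^{r}\in A_{p/r,q/r}\iff w^{q}\in A_{1+q/r-q/p}$ and the openness of the Muckenhoupt class, choose $\rho>r$ so close to $r$ that still $w^{q}\in A_{1+q/\rho-q/p}$, equivalently $w^{\rho}\in A_{p/\rho,q/\rho}$; a short computation shows that $\rho=qr/(q-\epsilon r)$ works whenever $w^{q}\in A_{1+q/r-q/p-\epsilon}$, which holds for some $\epsilon>0$ by the reverse H\"older property. Since $r<\rho<p<n/\alpha$, one has $s:=\rho'\in(n/(n-\alpha),r')$, so the hypothesis of the theorem guarantees $K_{\alpha}\in H_{\alpha,s}$ and $T_{\alpha}\colon L^{s}\to L^{q_{s}}$. \textbf{(ii)} Apply the first Kurtz theorem above with $r$ replaced by $\rho$ to obtain $M^{\sharp}(T_{\alpha}f)(x)\leq C\,M_{\alpha,\rho}f(x)$. \textbf{(iii)} Since $w^{\rho}\in A_{p/\rho,q/\rho}$ implies $w^{q}\in A_{\infty}$, the Fefferman--Stein inequality yields
\begin{equation*}
\|T_{\alpha}f\|_{L^{q}(w^{q})}\leq C\,\|M^{\sharp}(T_{\alpha}f)\|_{L^{q}(w^{q})}\leq C\,\|M_{\alpha,\rho}f\|_{L^{q}(w^{q})}.
\end{equation*}
\textbf{(iv)} Rewriting $M_{\alpha,\rho}f=(M_{\alpha\rho}(|f|^{\rho}))^{1/\rho}$, note that $1/(p/\rho)-1/(q/\rho)=\alpha\rho/n$ and $w^{\rho}\in A_{p/\rho,q/\rho}$, which is exactly the Muckenhoupt--Wheeden condition for $M_{\alpha\rho}\colon L^{p/\rho}(w^{p})\to L^{q/\rho}(w^{q})$. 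Unraveling the power $1/\rho$ yields $\|M_{\alpha,\rho}f\|_{L^{q}(w^{q})}\leq C_{w}\,\|f\|_{L^{p}(w^{p})}$, and concatenating with (iii) proves the theorem.

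The main obstacle is step (i): one must exploit the fact that the Muckenhoupt condition on $w^{r}$ is open on the right, so that the exponent $\rho$ in the pointwise bound can be chosen \emph{strictly} above $r$, because the endpoint case $\rho=r$ (i.e.\ $s=r'$) is excluded from the hypotheses of Kurtz's pointwise estimate. A secondary technical point is that the Fefferman--Stein inequality needs an \emph{a priori} finiteness condition on some $L^{q_{0}}$-norm of $T_{\alpha}f$; this is standard and can be handled by a truncation/density argument on a suitable dense subclass of $L^{p}(w^{p})$, after which the quantitative bound obtained passes to the limit.
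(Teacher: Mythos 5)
The paper itself gives no proof of this statement—it is simply quoted from Kurtz \cite{K90}—and your argument is precisely the classical route used there: upgrade $w^r\in A_{p/r,q/r}$ (equivalently $w^q\in A_{1+q/r-q/p}$) by openness to $w^{\rho}\in A_{p/\rho,q/\rho}$ for some $\rho>r$ with $\rho'\in(n/(n-\alpha),r')$, apply the sharp maximal estimate $M^{\sharp}(T_{\alpha}f)\lesssim M_{\alpha,\rho}f$ at that exponent, then Fefferman--Stein with $w^q\in A_{\infty}$ and the Muckenhoupt--Wheeden bound for $M_{\alpha\rho}$ via $M_{\alpha,\rho}f=\left(M_{\alpha\rho}(|f|^{\rho})\right)^{1/\rho}$. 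Your exponent bookkeeping is correct and you rightly flag the a priori finiteness needed for Fefferman--Stein (handled on bounded compactly supported $f$ using the assumed unweighted $L^{s}\to L^{q}$ boundedness), so the proposal is sound and matches the cited source's approach.
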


 More recently, in \cite{BLR11} the authors proved a version of Theorem \ref{KMsharp} without assuming that the operator is bounded. 

\begin{teo}\cite{BLR11}
Let $0<\alpha<n$ and $1<r<\infty$. Let $T$ be defined as  in $\eqref{defT}$ and let
$K_\alpha\in S_{\alpha,r'}\cap H_{\alpha,r'}$. Then   there exists $C>0$  such that for  $f\in L_c^{\infty}$,
$$M^{\sharp}(T_{\alpha}f)(x)\leq C M_{\alpha,r}f(x),$$
where $M^{\sharp}$ is the classical sharp maximal function.
\end{teo}

From this result and the good-$\lambda$ technique, it follows that
\begin{prop}\label{debil}
Let $0<\alpha < n$ and $1\leq r<n/\alpha$. Let    $T$ be defined as  in $\eqref{defT}$ and let
$K_\alpha\in S_{\alpha,r'}\cap H_{\alpha,r'}$, then there
exists a constant $C_w >0$, depending on $w$, such that for  $f\in
L^{\infty}_{c}(dx)$ and $w^r\in A_{1,\frac{n}{n-\alpha r}}$
 $$\underset{\lambda>0}{\sup }\lambda^r {w^{\frac{rn}{n-\alpha r}}\{x\in \R : |(T_{\alpha}f)(x)|>\lambda\}}^{\frac{n-\alpha r}{n}} \leq C_w\int |f|^rw^r. $$
\end{prop}
The idea of  this proof is the same as the one given in
Theorem 3.6 in \cite{RU14}, so we omit it.

From this result we know that if $w^r\in
A_{\frac{p}{r},\frac{q}{r}}$ then $T_{\alpha}$ is bounded from
$L^{p}(w^p)$ into $L^{q}(w^q)$ and it was only  known  the
dependence of the $w$ constant in the case $T_{\alpha}=I_{\alpha}$,
Theorem \ref{SharpIalpha}.
 The main result in this paper is the dependence of the constant
$[w^r]_{A_{\frac{p}{r},\frac{q}{r}}}$ in  the inequality
\eqref{fuerteKurtz},
 for a class  of operators  given by a kernel $K_\alpha$ less regular than the one on $I_\alpha$. These kernels  satisfy a
$L^{\alpha,r'}-$H\"ormander condition. The result is the following

\begin{teo}\label{CotaT}
Let $0<\alpha<n$ and let $T_\alpha $ defined as in \eqref{defT}. Let
$1\leq r<p<n/\alpha$, $1/q=1/p-\alpha/n$. Suppose  $K_\alpha \in
S_{\alpha,r'} \cap H_{\alpha,r'}$. If $w^r\in
A_{\frac{p}{r},\frac{q}{r}}$, then
$$\|T_{\alpha}f\|_{L^q(w^q)}\leq c_n [w^r]_{A_{\frac{p}{r},\frac{q}{r}}}^{\max\left\{1-\frac{\alpha}{n},\frac{(p/r)'}{q}\left(1-\frac{\alpha r}{n}\right)\right\}} \|f\|_{L^p(w^p)}.$$
\end{teo}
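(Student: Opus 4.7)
The plan is a two-stage argument: first establish a pointwise sparse domination for $T_\alpha$ that is adapted to the $L^{\alpha,r'}$-H\"ormander setting, then convert this into the sharp weighted inequality by estimating the resulting sparse operator.

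For the first stage I would prove a pointwise bound of the form
\[
|T_\alpha f(x)| \;\le\; c_n \sum_{j=1}^{3^n}\sum_{Q\in \mathcal{S}_j} |Q|^{\alpha/n}\|f\|_{r,Q}\,\chi_Q(x),
\]
where each $\mathcal{S}_j$ is a sparse family living in one of the $3^n$ shifted dyadic lattices. The construction follows a Lerner-type recursive stopping-time procedure: starting from a fixed cube $Q_0$, one selects the maximal subcubes $Q'\subsetneq Q_0$ on which either a suitable truncated maximal operator associated to $T_\alpha$ or the fractional average $|Q'|^{\alpha/n}\|f\|_{r,Q'}$ exceeds a large multiple of its value on $Q_0$. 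The fractional size condition $K_\alpha\in S_{\alpha,r'}$ controls the diagonal piece $T_\alpha(f\chi_{3Q_0})$ after a H\"older step on the annuli $|y|\sim 2^{-m}\ell(Q_0)$, and $K_\alpha\in H_{\alpha,r'}$ handles the oscillation $T_\alpha(f\chi_{(3Q_0)^c})(x)-T_\alpha(f\chi_{(3Q_0)^c})(y)$ for $x,y\in Q_0$ by decomposing the complement of $3Q_0$ into annuli $|y|\sim 2^mR$ and applying H\"older on each, precisely so the $L^{\alpha,r'}$ sum telescopes into a constant.

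For the second stage, I would dualize $\|\mathcal{A}_{\alpha,r,\mathcal{S}} f\|_{L^q(w^q)}$ against a non-negative $g$ with $\|g\|_{L^{q'}}\le 1$ and reduce to controlling
\[
\sum_{Q\in\mathcal{S}} |Q|^{\alpha/n}\|f\|_{r,Q}\,\left(\frac{1}{|Q|}\int_Q gw\right)|Q| \;\le\; C\,[w^r]_{A_{p/r,q/r}}^{\beta}\,\|f\|_{L^p(w^p)}\,\|g\|_{L^{q'}},
\]
with $\beta$ equal to the exponent in the theorem. Using the disjoint major-subset property $|E_Q|\ge |Q|/2$ of a sparse family and H\"older with the exponents $p/r$ and $q/r$, together with the equivalences $[w^q]_{A_{1+(q/r)/(p/r)'}}=[w^r]_{A_{p/r,q/r}}$ and $[w^{-p'}]_{A_{1+(p/r)'/(q/r)}}=[w^r]_{A_{p/r,q/r}}^{(p/r)'/(q/r)}$ recalled in the introduction, the sum splits into two competing estimates whose worst case is exactly the maximum in $\beta$. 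This is the analogue, in the present fractional $L^r$-averaged setting, of the Lacey-Moen-P\'erez-Torres sharp bound for the sparse fractional operator.

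The main obstacle will be the first stage: with only an averaged kernel smoothness $H_{\alpha,r'}$ one cannot repeat the pointwise manipulations used in the Calder\'on-Zygmund case or in the pure $I_\alpha$ case ($r'=\infty$), and the classical sparse operator built on $L^1$-averages is too small to control $T_\alpha$. The annular H\"older step required to pair the H\"ormander tail with $f$ is exactly what forces one to use $L^r$-averages, and producing the correct sparse bound with those averages is the new ingredient advertised in the abstract. Once this is in place, the weighted step is essentially bookkeeping, and sharpness of $\beta$ is inherited at the endpoint $r'=\infty$ from the power-weight test $w(x)=|x|^\delta$ used in \cite{LMPT10}.
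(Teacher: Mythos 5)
Your first stage coincides with the paper's own route: the authors prove exactly the pointwise sparse bound you describe (Theorem \ref{Sparse}) by the Lerner--Ombrosi scheme, controlling the grand maximal truncated operator $M_{T_{\alpha}}$ pointwise by $M_{\alpha,r}f+T_{\alpha}(|f|)$ --- the size condition $S_{\alpha,r'}$ on the annuli $|y|\sim 2^{-m}R$ handling the local part and the $H_{\alpha,r'}$ sum handling the oscillation, just as you indicate --- and then running a Calder\'on--Zygmund stopping time on the level set of $M_{T_{\alpha},Q_0}f$. One ingredient you leave implicit but cannot skip: to make the exceptional set $E$ satisfy $|E|\leq \tfrac{1}{2^{n+2}}|Q_0|$ one needs $M_{T_{\alpha}}$ to map $L^r(dx)$ into $L^{\frac{rn}{n-\alpha r},\infty}(dx)$, and this rests on an unweighted weak-type bound for $T_{\alpha}$ itself (Proposition \ref{debil} in the paper, which goes back to Kurtz's pointwise estimate $M^{\sharp}(T_{\alpha}f)\lesssim M_{\alpha,r}f$); the kernel conditions alone do not produce it inside the stopping-time argument.

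The second stage is where your proposal has a genuine gap. H\"older together with the Lebesgue-measure major-subset property $|E_Q|\geq \tfrac12|Q|$ is not enough to reach the exponent $\max\bigl\{1-\tfrac{\alpha}{n},\,\tfrac{(p/r)'}{q}\bigl(1-\tfrac{\alpha r}{n}\bigr)\bigr\}$. Writing $u=w^q$, $\sigma=w^{-(p/r)'r}$ and factoring $\|f\|_{r,Q}^r$ through the $\sigma$-average of $|f|^r\sigma^{-1}$, the definition of $[w^r]_{A_{p/r,q/r}}$ extracts a factor $[w^r]_{A_{p/r,q/r}}^{r/q}$ from $|Q|^{r\alpha/n-1}\sigma(Q)u(Q)$ and leaves a bilinear sum carrying the weights $\sigma(Q)^{r/p}u(Q)^{1-r/q}$; since $\tfrac{r}{p}+\bigl(1-\tfrac{r}{q}\bigr)=1+\tfrac{\alpha r}{n}>1$, the exponents do not balance for a direct H\"older/Carleson step. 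Closing the estimate with the stated power requires passing from $\sigma(Q),u(Q)$ to $\sigma(E_Q),u(E_Q)$ at the cost of the Fujii--Wilson constants $[\sigma]_{A_{\infty}}$ and $[u]_{A_{\infty}}$ (a weighted Carleson embedding), and then converting $[\sigma]_{A_{\infty}}$ and $[u]_{A_{\infty}}$ into powers of $[w^r]_{A_{p/r,q/r}}$. This $A_{\infty}$ step is exactly what the paper imports wholesale from the Fackler--Hyt\"onen two-weight theorem after rewriting $A_{r,\mathscr{S}}^{\alpha}f=\bigl(\tilde{A}_{1/r,\mathscr{S}}^{1-\alpha/n}(f^r)\bigr)^{1/r}$ in the proof of Theorem \ref{cotasharp}. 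Omitting it, your ``two competing estimates'' produce a strictly larger power of the weight constant --- precisely the non-sharp bound the paper's Remark attributes to the earlier sparse approach --- so this step is not ``essentially bookkeeping'' and must be supplied.
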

This estimate is sharp in the following sense

\begin{prop}\label{CotaSharp}
Let  $0<\alpha<n$, $1\leq r<p<n/\alpha$ and $1/q=1/p-\alpha/n$. Let
$K_\alpha \in S_{\alpha,r'} \cap H_{\alpha,r'}$  and $T_\alpha$
defined as in \eqref{defT}.  Suppose that there exists an  increasing 
function $\Phi:[1,\infty)\rightarrow (0,\infty)$ such that
$$\|T_{\alpha}\|_{L^p(w^p)\rightarrow L^q(w^q)}\lesssim \Phi([w^r]_{A_{\frac{p}{r},\frac{q}{r}}}),$$
for all $w^r\in A_{\frac{p}{r},\frac{q}{r}}$ then
$$\Phi(t)\gtrsim
t^{\max\left\{1-\frac{\alpha}{n},\frac{(p/r)'}{q}\left(1-\frac{\alpha r}{n}\right)\right\}}.$$
\end{prop}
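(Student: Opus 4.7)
The plan is to establish the two lower bounds
\[\Phi(t) \gtrsim t^{1-\alpha/n} \qquad \text{and} \qquad \Phi(t) \gtrsim t^{\frac{1}{q}\left(1+\frac{(p/r)'r}{p}\right)}\]
independently; their maximum is exactly the assertion. In each case I would exhibit one concrete admissible kernel $K_\alpha \in S_{\alpha,r'}\cap H_{\alpha,r'}$ together with a one-parameter family $\{(w_\delta, f_\delta)\}_{0<\delta<1}$ with $w_\delta^r \in A_{p/r, q/r}$ such that
\[
[w_\delta^r]_{A_{p/r,q/r}}\asymp\delta^{-a_i}, \qquad \|T_\alpha f_\delta\|_{L^q(w_\delta^q)} \gtrsim \delta^{-b_i} \|f_\delta\|_{L^p(w_\delta^p)}, \quad \frac{b_i}{a_i}=\gamma_i.
\]
Inserting these triples into the hypothesized bound $\|T_\alpha\|_{L^p(w^p) \to L^q(w^q)} \lesssim \Phi([w^r]_{A_{p/r,q/r}})$, using the monotonicity of $\Phi$ and letting $\delta\to 0^+$ then forces $\Phi(t) \gtrsim t^{\gamma_i}$.

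For both exponents the same kernel should serve, namely the truncated Riesz potential
\[K_\alpha(x,y) = |x-y|^{\alpha-n}\chi_{\{|x-y|\leq 1\}}.\]
The verification $K_\alpha \in S_{\alpha,r'}\cap H_{\alpha,r'}$ is a direct computation on annuli $|x|\sim s$: the size inequality holds for $s\leq 1$ by integrating $|x|^{(\alpha-n)r'}$ and is trivial for $s>1$, while the H\"ormander sum converges via the mean-value bound $|\nabla K_\alpha(x)|\lesssim|x|^{\alpha-n-1}$ together with $\alpha<n$. For $\gamma_1=1-\alpha/n$ I would reuse the Lacey--Moen--P\'erez--Torres power-weight example, $w_\delta(x)=|x|^{(\delta-n)/q}\chi_{B(0,1)}(x)$ paired with $f_\delta(x)=|x|^{(\delta-n)/p}\chi_{B(0,1)}(x)$ (suitably adjusted so that $w_\delta^r\in A_{p/r,q/r}$); the action of $T_\alpha$ on these powers reproduces the exponent $\gamma_1$ essentially verbatim from \cite{LMPT10}. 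For $\gamma_2=\frac{p}{q(p-r)}$, I would use a configuration adapted to the $L^r$-averaging implicit in $H_{\alpha,r'}$: a source--target pair of balls at unit distance, with a Muckenhoupt-type power weight $w_\delta^r(x)=|x|^{-(n-\delta)(1-r/p)}$ on the source and $f_\delta=\chi_{B}$, producing $[w_\delta^r]_{A_{p/r,q/r}}\asymp\delta^{-1}$ and a norm ratio $\asymp\delta^{-\gamma_2}$.

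The principal difficulty is that the two exponents in the $\max$ reflect different mechanisms---a concentrated power singularity for $\gamma_1$ versus source--target separation with $L^r$-averaging for $\gamma_2$---and one must verify that the truncated Riesz kernel exhibits both asymptotic regimes with precisely the constants predicted by $\gamma_1$ and $\gamma_2$. While each individual computation is routine once the right test pair is chosen, it requires careful bookkeeping of the $r$-dependent exponents; at $r=1$ (so $r'=\infty$ and $\gamma_2=p'/q$) the argument collapses to the familiar LMPT calculation, confirming the consistency of the extremal construction.
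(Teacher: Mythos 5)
Your first lower bound is sound and coincides with the paper's own second example: the global power weight $w_\epsilon(x)=|x|^{(\epsilon-n)/q}$ paired with $f(x)=|x|^{(\epsilon-n)/r}\chi_{B(0,1)}$ gives $[w_\epsilon^r]_{A_{p/r,q/r}}\simeq\epsilon^{-1}$ and forces $\Phi(t)\gtrsim t^{1-\alpha/n}$ (your weight should not be cut off by $\chi_{B(0,1)}$, or it leaves the weight class, but that is cosmetic). The genuine gap is in the second exponent $\gamma_2$. Your chosen kernel, the truncated Riesz potential, belongs to $S_{\alpha,\infty}\cap H_{\alpha,\infty}$, i.e.\ it satisfies the \emph{strongest} size and H\"ormander conditions. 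Consequently $0\le T_\alpha f\le I_\alpha f$ for $f\ge0$ and $T_\alpha$ obeys the Lacey--Moen--P\'erez--Torres bound $\|T_\alpha\|_{L^p(w^p)\to L^q(w^q)}\lesssim [w]_{A_{p,q}}^{(1-\alpha/n)\max\{1,p'/q\}}\le [w^r]_{A_{p/r,q/r}}^{(1-\alpha/n)\max\{1,p'/q\}}$, the last inequality by Jensen since $p'\le r(p/r)'$. The target exponent $\gamma_2=\frac{(p/r)'}{q}\bigl(1-\frac{\alpha r}{n}\bigr)$ blows up as $r\uparrow p$ (because $(p/r)'=\frac{p}{p-r}\to\infty$) while the LMPT exponent stays bounded, so for $r$ close to $p$ \emph{no} family of weights can make your kernel attain $\gamma_2$: the plan fails not for lack of bookkeeping but because the operator is too good. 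Independently, your test function $f_\delta=\chi_B$ is an indicator, for which $L^r$- and $L^1$-averages agree up to constants, so that computation is structurally incapable of producing an $r$-dependent exponent.

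The mechanism behind $\gamma_2$ is that the $L^r$-average $\|f\|_{r,Q}$ in the sparse form must genuinely blow up relative to the $L^1$-average: the paper takes $f(x)=|x|^{(\epsilon-n)/r}\chi_{B(0,1)}$, so that $|f|^r$ is barely integrable and $\|f\|_{r,Q_k}\simeq \epsilon^{-1/r}|Q_k|^{(\epsilon-n)/(rn)}$, paired with $w_\epsilon(x)=|x|^{(n-\epsilon)/(r(p/r)')}$, for which $[w_\epsilon^r]_{A_{p/r,q/r}}\simeq\epsilon^{-q/(r(p/r)')}$. Crucially, the paper evaluates these test pairs on the \emph{sparse operator} $A_{r,\mathscr{S}}^{\alpha}$ (with the dyadic cubes $Q_k$ centered at the origin as sparse family), not on a kernel operator; it thereby shows Theorem 3.1 is sharp but never exhibits a kernel in $S_{\alpha,r'}\cap H_{\alpha,r'}$ realizing $\gamma_2$. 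If you want to prove the proposition for an honest $T_\alpha$, you must construct a kernel that fails $H_{\alpha,s}$ for every $s>r'$ and for which the $L^r$-averages in the sparse domination are essentially attained; the truncated Riesz kernel cannot serve. (A minor further point: the exponent $\frac1q\bigl(1+\frac{(p/r)'r}{p}\bigr)$ in the statement equals $\frac{(p/r)'}{q}$, whereas the computation in Section 4 yields $\frac1q\bigl(1+\frac{(p/r)'r}{q}\bigr)=\frac{(p/r)'}{q}\bigl(1-\frac{\alpha r}{n}\bigr)$, matching Theorem 3.1; the $p$ in the displayed exponent appears to be a typo for $q$, and your $\gamma_2=\frac{p}{q(p-r)}$ inherits it.)
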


\begin{obs}
In the case of  the fractional integral $I_\alpha$, $r'=\infty$, we
obtain the same sharp bound as in \cite{LMPT10}.
\end{obs}

\begin{obs}
  For the singular integral with kernel $k\in H_{r'}$, ($\alpha=0$)
  K. Li in \cite{Li16} give the sparse domination.  Following the same proof of Proposition
  \ref{CotaSharp} in Section 5,  one can obtain  the sharpness 
in this case.
\end{obs}

The paper continues as follows:  in the next  section we present
some particular operator as applications of these results. In
Section 3 we give the sparse domination for $T_{\alpha}$. In Section
4 we obtain the $L^p(w^p)-L^q(w^q)$ boundedness of the sparse
operator with the 
 dependence of the
$[w^r]_{A_{\frac{p}{r},\frac{q}{r}}}$ constant. Finally in Section 5
we give some examples to prove that the dependency of the constant
given in Section 4 is optimal.

\section{Applications}
In this section, we give more examples of our results.\\

\begin{itemize}
\item Fractional rough operator:\\

 Let  $\Omega$ be a function
defined on $S^{n-1}$. 
We consider its extention to $\R \setminus \{0\}$, 
which is defined as $\Omega(x)=\Omega(x/|x|)$. Thus $\Omega$ is a
homogeneous function of degree $0$. For $1\leq s\leq \infty$, the
$L^{s}$-modulus of continuity of $\Omega$ is defined as
$$\bar{\omega}_{s}(t)=\sup_{|y|<t} \|\Omega( \cdot +y)-\Omega(\cdot)\|_{s,S^{n-1}}.$$

Let  $0<\alpha<n$, $r'>\frac{n}{n-\alpha}$ and  $\Omega\in
L^{r'}(S^{n-1})$ such that $\int_0^1
\bar{\omega}_{r'}(t)\frac{dt}{t}<\infty$. Let
$$K_\alpha(x)=\frac{\Omega(x/|x|)}{|x|^{n-\alpha}},$$
and $T_\alpha f(x)=K_\alpha *f(x)$. In \cite{BLR11}, it is proved
that $K_\alpha\in H_{\alpha,r'}\cap S_{\alpha,r'}$. Since
$r'>\frac{n}{n-\alpha}$, its conjugate exponent 
$r<n/\alpha$.
Thus applying the main result, Theorem \ref{CotaT},  we obtain that
for $1<r<p<n/\alpha$ and $1/q=1/p-\alpha/n$
$$\|T_\alpha f\|_{L^q(w^q)}\leq c_n [w^r]_{A_{\frac{p}{r},\frac{q}{r}}}^{\max\left\{1-\frac{\alpha}{n},\frac{(p/r)'}{q}\left(1-\frac{\alpha r}{n}\right)\right\}} \|f\|_{L^p(w^p)}.$$




\

\item Others kernels: \\

 Let $0<\alpha<1$,
$\beta >0$, $1<r<p<1/\alpha$ and $\frac1{q}=\frac{1}p-\alpha$. For
$r'$ the conjugated exponent of $r$, let us consider
$$k(t)=\left(\frac1{t\log(e/t)^{1+\beta}}\right)^{1/r'}\chi_{(0,1)}(t).$$
As it was shown in  \cite{IFRi18} and \cite{MPTG05}, $k\in H_{r'}\cap
S_{0,r'}$. Now, let
$$K_{\alpha}(t)=|t+4|^{\alpha}k(|t+4|),$$
by Proposition 4.1 in \cite{BLR11}, $K_{\alpha}\in H_{\alpha,r'}\cap
S_{\alpha,r'}$. Finally let  $T_\alpha f=K_\alpha*f$. Applying the
main result, Theorem \ref{CotaT},  we obtain for $1<r<p<1/\alpha$
and  $1/q=1/p-\alpha$
$$\|T_{\alpha}f\|_{L^q(w^q)}\leq c_n [w^r]_{A_{\frac{p}{r},\frac{q}{r}}}^{\max\left\{1-{\alpha},\frac{(p/r)'}{q}\left(1-{\alpha r}\right)\right\}} \|f\|_{L^p(w^p)}.$$
 For more details of the sharpness, see section 5.2.

\end{itemize}

\section{Sparse domination for $T_{\alpha}$}
In this section we present a sparse domination result for the
operator $T_{\alpha}$.  Let us recall some well known results.

It is defined that a kernel $K\in H_{Dini}$ if  
$$|K(x)|\leq \frac{C_K}{|x|^n}$$
and 
$$|K(x-y)-K(x'-y)|\leq \omega\left(\frac{|x-x'|}{|x-y|}\right)\frac1{|x-y|^n}$$
for $|x-y|>2|x-x'|$. The function $\omega:[0,1]\to [0, \infty)$ is a continuous, increasing, submultiplicative with $\omega(0)=0$ and satisfies a the Dini condition, $$\int_{0}^1 \omega(t)\frac{dt}{t}<\infty.$$

Observe that 
$$H_{Dini}\subset H_{\infty}\subset H_r \subset H_s \subset H_1, \quad 1<s<r<\infty.$$

In the case that $T$ is a Calder\'on-Zygmund operator with  $K\in H_{Dini}$ the sparse domination was proved in \cite{L16}. For their commutators in \cite{LORR16} and for vector-valued case in \cite{CLPRR20}. The sparse domination, for $K\in H_r$ was considered in \cite{Li16} and for $K$ satifying a  Young type H\"ormander condition was considered in \cite{IFRR17}. Finally, for the case of $I_{\alpha}$ the sparse domination was studied in \cite{AMPRR17}. 
 It is posible to obtain a pointwise sparse do\-mi\-na\-tion that covers the general fractional operators that we are considering.
\\

To state our result of sparse domination, we recall some definitions.

Given a cube $Q\in \R$, we denote by $\mathcal{D}(Q)$ the family of
all dyadic cubes  respect to $Q$, that is, the cube obtained subdividing
 repeatedly $Q$ and each of its descendant into $2^n$ subcubes of the same side lengths.

Given a dyadic family $\mathcal{D}$ we say that a family $\mathscr{S}\subset \mathcal{D}$ is a $\eta$-sparse family with $0<\eta<1$, if
 for every $Q\in \mathscr{S}$, there exists a measurable set $E_Q\subset Q$ such that $\eta|Q|\leq |E_Q|$ and the family $\{E_Q\}_{Q\in \mathscr{S}}$
  are pairwise disjoint.

\begin{teo}\label{Sparse}
Let $0<\alpha<n$, $1\leq r < \infty$ and let $T_\alpha$ be defined
as in \eqref{defT}. Suppose  $K_\alpha \in S_{\alpha,r'} \cap
H_{\alpha,r'}$. For any $f\in L^{\infty}_c(\R)$,  there exist $3^n$
sparse families   such that for a.e. $x\in \R$,
$$|T_{\alpha}f(x)|\leq c  \sum_{j=1}^{3^n}\sum_{Q\in \mathscr{S}_j}|Q|^{\alpha/n}\|f\|_{r,Q}\chi_{Q}(x):=c  \sum_{j=1}^{3^n}\A_{r,\mathscr{S}_j}^{\alpha}f(x).$$
\end{teo}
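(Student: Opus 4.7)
The plan is to follow the Lerner--Nazarov-type framework for sparse domination developed in \cite{AMPRR17, IFRR17, LORR16, Li16}, adapted to the fractional setting. The three-lattice theorem furnishes $3^n$ dyadic lattices $\mathcal{D}_1, \ldots, \mathcal{D}_{3^n}$ with the property that every cube in $\R$ is contained in some $Q \in \mathcal{D}_j$ of comparable size; this reduces the matter to producing one sparse family per lattice and accounts for the factor $3^n$ in the conclusion.

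The central object is the grand maximal truncated operator
$$\mathcal{M}_{T_\alpha} f(x) := \sup_{Q \ni x}\ \underset{\xi \in Q}{\mathrm{ess\,sup}}\ \bigl| T_\alpha (f \chi_{\R \setminus 3Q})(\xi)\bigr|,$$
for which I would first establish the pointwise bound $\mathcal{M}_{T_\alpha} f(x) \leq c\, M_{\alpha,r} f(x)$. Fixing $x \in Q$ with center $x_Q$, one decomposes $f\chi_{\R \setminus 3Q}$ on dyadic annuli $|y - x_Q|\sim 2^m\ell(Q)$. On each annulus, the $L^{\alpha,r'}$-H\"ormander norm of the difference $K_\alpha(\cdot - \xi) - K_\alpha(\cdot - x_Q)$ paired via H\"older with the $r$-average $\|f\|_{r, 2^{m+1}Q}$ gives, after summation, a bound of the form $c\, M_{\alpha,r}f(x)$, while the size condition $K_\alpha \in S_{\alpha,r'}$ handles the contribution at $\xi = x_Q$.

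The recursive construction is then standard. Fix $Q_0 \in \mathcal{D}$ and assume $f$ supported in $3Q_0$. Define
$$E := \bigl\{ x \in Q_0 : \mathcal{M}_{T_\alpha}(f\chi_{3Q_0})(x) + M_{\alpha,r}(f\chi_{3Q_0})(x) > c\, |Q_0|^{\alpha/n}\|f\|_{r, 3Q_0}\bigr\}.$$
Using the preceding pointwise estimate together with the weak endpoint for $M_{\alpha,r}$, one obtains $|E| \leq |Q_0|/2$ for $c$ sufficiently large. Let $\{P_j\}$ be the maximal dyadic subcubes of $Q_0$ contained in $E$; on each $P_j$ write $f\chi_{3Q_0} = f\chi_{3P_j} + f\chi_{3Q_0 \setminus 3P_j}$. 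The tail piece is controlled pointwise on $P_j$ by $c\,|Q_0|^{\alpha/n}\|f\|_{r,3Q_0}$ through the definition of $\mathcal{M}_{T_\alpha}$ applied at a point of the parent $\widehat{P}_j$ lying outside $E$ (the small intermediate contribution $T_\alpha(f\chi_{3\widehat{P}_j\setminus 3P_j})$ being absorbed via $S_{\alpha,r'}$), while the local piece is retained for iteration. Iterating and collecting all of the cubes produced gives a $\tfrac{1}{2}$-sparse family $\mathscr{S}_j \subset \mathcal{D}_j$; summing over $j = 1, \ldots, 3^n$ and transferring from averages on $3Q$ to averages on $Q$ via the three-lattice step yields the theorem.

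The main obstacle is the pointwise estimate $\mathcal{M}_{T_\alpha} f \lesssim M_{\alpha,r} f$, since this is exactly where the factor $(2^m R)^{n-\alpha}$ built into the $L^{\alpha,r'}$-H\"ormander condition must match the $r'$-kernel oscillations to produce, after H\"older, the fractional weight $|Q|^{\alpha/n}$ present in $M_{\alpha,r}$. Once that control is in place, the recursive descent and the three-lattice reduction proceed along well-established lines.
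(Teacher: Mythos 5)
Your overall architecture (grand maximal truncated operator, exceptional set, recursive descent, three-lattice trick) is the same as the paper's, but the key lemma you propose is stated too strongly and is in fact false: the pointwise bound $\mathcal{M}_{T_\alpha}f(x)\lesssim M_{\alpha,r}f(x)$ cannot hold. The H\"ormander condition only controls the \emph{differences} $K_\alpha(\cdot-\xi)-K_\alpha(\cdot-x)$, so after the annular decomposition you are left with the untouched term $|T_\alpha(f\chi_{\R\setminus B_x})(x)|$, and this term is \emph{not} dominated by $M_{\alpha,r}f(x)$: the size condition $S_{\alpha,r'}$ gives on the $m$-th annulus the bound $(2^mR)^{n}\,(2^mR)^{\alpha-n}\|f\|_{r,2^{m+1}B_x}\le M_{\alpha,r}f(x)$ with no decay in $m$, so the sum over all outer annuli diverges (already for $T_\alpha=I_\alpha$, $r=1$, one checks that $I_\alpha f\not\lesssim M_\alpha f$ pointwise with uniform constant, e.g.\ with $f=|y|^{-\alpha}(\log|y|)^{-1}\chi_{\{2<|y|<N\}}$). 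The correct statement, and the one the paper proves, is
$$\mathcal{M}_{T_\alpha}f(x)\lesssim M_{\alpha,r}f(x)+T_\alpha(|f|)(x),$$
and consequently the measure estimate $|E|\le c\,|Q_0|$ for your exceptional set cannot be obtained from the weak endpoint for $M_{\alpha,r}$ alone: you also need the weak $L^r\to L^{\frac{rn}{n-\alpha r},\infty}$ boundedness of $T_\alpha$ itself. In the paper this is supplied by Proposition \ref{debil} (which rests on Kurtz's sharp maximal function estimate $M^{\sharp}(T_\alpha f)\lesssim M_{\alpha,r}f$ and a good-$\lambda$ argument), and it is a genuine additional input that your proposal does not identify. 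This is the missing idea.

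Two smaller points. First, taking ``the maximal dyadic subcubes of $Q_0$ contained in $E$'' does not work, since $E$ need not contain any dyadic cube and in any case these cubes need not exhaust $E$ up to measure zero; the paper instead applies the Calder\'on--Zygmund decomposition to $\chi_E$ at height $2^{-(n+1)}$, which yields disjoint $P_j$ with $\sum_j|P_j|\le 2^{n+1}|E|\le\tfrac12|Q_0|$, $|E\setminus\cup_jP_j|=0$, and $|P_j\cap E^c|>0$, the last property being exactly what lets you evaluate $M_{T_\alpha,Q_0}$ at a good point of $P_j$ itself (no passage to the parent, and no intermediate term to absorb). Second, your reduction of the tail on $P_j$ via a good point of the parent is workable (the intermediate piece $T_\alpha(f\chi_{3\widehat P_j\setminus 3P_j})$ involves only finitely many annuli, so $S_{\alpha,r'}$ does suffice there), but it is unnecessary once the $P_j$ are chosen as above.
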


 The grand maximal truncated operator $M_{T_{\alpha}}$ is  defined by
$$M_{T_{\alpha}}f(x)=\underset{Q\ni x}{\sup }\underset{\xi \in Q}{\text{ ess}\sup} |T_{\alpha}(f\chi_{\R\setminus 3Q})(\xi)|,$$
where the supremmum is taken over all the cubes $Q\subset \R$
containing $x$. For the proof of the preceding theorem we need to
show that $M_{T_{\alpha}}$ maps $L^r(dx)$ into
$L^{\frac{rn}{n-\alpha r},\infty}(dx)$.
 Also we need:

\begin{itemize}

\item a local version of  $M_{T_{\alpha}}$ 
which is defined, for a cube $Q_0\subset \R$,
 in the following way:

$$M_{T_{\alpha},Q_0}f(x)=\underset{x\in Q\subset Q_0}{\sup}\underset{\xi \in Q}{\text{ ess}\sup} |T_{\alpha}(f\chi_{3Q_0 \setminus 3Q})(\xi)|.$$

\item  Let $K_\alpha \in  S_{\alpha,r'}\cap H_{\alpha, r'}$  we
define $$\tilde T_\alpha f(x)=\int |K_\alpha|(x-y)f(y)dy.$$ Observe
that if $K_\alpha \in  S_{\alpha,r'}\cap H_{\alpha, r'}$ then
$|K_\alpha |\in  S_{\alpha,r'}\cap H_{\alpha, r'}$ and Proposition
\ref{debil} holds for $\tilde T_\alpha$.

\end{itemize}

\begin{lema}\label{LemaSparse}
Let $0<\alpha<n$, $1\leq r <\infty$, $K_\alpha \in S_{\alpha,r'}
\cap H_{\alpha,r'}$ and $Q_0\subset \R$ be a cube. Let $T_\alpha$ be
defined as in \eqref{defT}  and $f\in L^{\infty}_c(\mathbb{R}^{n})$. Then,
\begin{enumerate}
\item for $a.e.\, x\in Q_0$
$$|T_{\alpha}(f\chi_{3Q_0})(x)|\leq M_{T_{\alpha},Q_0}f(x),$$
\item for all $x\in \R$
$$M_{T_{\alpha}}(f)(x)\lesssim M_{\alpha,r}f(x) + \tilde T_{\alpha}(|f|)(x).$$
\end{enumerate}
From the last estimate and  Proposition \ref{debil} it follows that
$M_{T_{\alpha}}$ is bounded from $L^r(dx)$ into
$L^{\frac{rn}{n-\alpha r},\infty}(dx)$.
\end{lema}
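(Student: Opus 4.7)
The plan is to prove the two estimates separately: part (1) is a local decomposition combined with a weak-type endpoint bound, while part (2) is a standard kernel-smoothness estimate that exploits the $H_{\alpha,r'}$ H\"ormander condition.

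For (1), I fix a Lebesgue point $x \in Q_0$ of $T_\alpha(f\chi_{3Q_0})$ (these form a set of full measure). For each cube $Q$ with $x \in Q \subset Q_0$ and a.e.\ $\xi \in Q$, linearity gives
$$|T_\alpha(f\chi_{3Q_0})(\xi)| \le |T_\alpha(f\chi_{3Q_0\setminus 3Q})(\xi)| + |T_\alpha(f\chi_{3Q})(\xi)|.$$
The first summand is majorized by $M_{T_\alpha,Q_0}f(x)$ directly from the definition of the local grand maximal operator. To absorb the second, I invoke Proposition \ref{debil} with $w\equiv 1$, which furnishes a quantitative weak-type estimate of $T_\alpha(f\chi_{3Q})$ on $Q$; averaging in $\xi$, letting $Q$ shrink to $\{x\}$, and invoking Lebesgue differentiation for $T_\alpha(f\chi_{3Q_0})$ forces the local contribution to be negligible, yielding (1).

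For (2), I fix $x \in \mathbb{R}^n$, a cube $Q \ni x$, and $\xi \in Q$. Adding and subtracting the value at $x$ I write
$$T_\alpha(f\chi_{\mathbb{R}^n\setminus 3Q})(\xi) = \bigl[T_\alpha(f\chi_{\mathbb{R}^n\setminus 3Q})(\xi) - T_\alpha(f\chi_{\mathbb{R}^n\setminus 3Q})(x)\bigr] + T_\alpha(f\chi_{\mathbb{R}^n\setminus 3Q})(x).$$
The second bracket is pointwise bounded by $T_\alpha(|f|)(x)$, so the work lies in the difference. I cover $\mathbb{R}^n \setminus 3Q$ by dyadic annuli $\{|y-x| \sim 2^m R\}$ with $R$ a fixed multiple of $\operatorname{diam}(Q)$ chosen large enough that $R > c_{r'}|\xi - x|$ for every $\xi \in Q$. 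On each annulus I apply H\"older with exponents $(r,r')$ and use the elementary estimate $\|f\|_{r,B(x,2^{m+1}R)} \le c (2^m R)^{-\alpha} M_{\alpha,r}f(x)$, coming directly from the definition of $M_{\alpha,r}$. This produces the prefactor $(2^m R)^{n-\alpha}$ in every term, and the resulting series is exactly the one controlled by $H_{\alpha,r'}$ after a translation change of variable, giving a bound by $c M_{\alpha,r}f(x)$. Taking ess\ sup in $\xi$ and sup in $Q \ni x$ closes (2).

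The main technical hurdle is (1): the local term $T_\alpha(f\chi_{3Q})(\xi)$ does not vanish pointwise as $Q$ shrinks, so the quantitative weak-type bound of Proposition \ref{debil} is indispensable in order to guarantee that its contribution is small on a set of nearly full density in $Q$; mere local integrability of $T_\alpha f$ would not suffice. Once (2) is established, the claimed boundedness $M_{T_\alpha}:L^r(dx)\to L^{rn/(n-\alpha r),\infty}(dx)$ follows by combining the known weak-type bounds for $M_{\alpha,r}$ and for $T_\alpha$ (Proposition \ref{debil}).
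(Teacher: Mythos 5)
Your part (2) contains a genuine gap: the dyadic annuli $\{|y-x|\sim 2^mR\}$, $m\ge 1$, with $R$ a large fixed multiple of $\operatorname{diam}(Q)$ (you need $R>c_{r'}|\xi-x|$, where $c_{r'}>1$ and $|\xi-x|$ can be as large as $\operatorname{diam}(Q)$), only cover $\{|y-x|>2R\}$, while $\R\setminus 3Q$ contains points $y$ with $|y-x|$ as small as $\ell(Q)$. So the shell $(\R\setminus 3Q)\cap B(x,2R)$ is simply not covered by your decomposition, and on that shell the $L^{\alpha,r'}$-H\"ormander condition cannot be invoked for the kernel difference (the annuli would have to start at a radius smaller than $c_{r'}|\xi-x|$). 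This is exactly why the paper introduces an intermediate ball $B_x\supset 3Q$, splits off the extra term $T_{\alpha}(f\chi_{B_x\setminus 3Q})(\xi)$, and controls it by $M_{\alpha,r}f(x)$ using the fractional size condition $S_{\alpha,r'}$ over the finitely many dyadic scales between $\ell(Q)$ and $R$. A warning sign in your write-up is that the hypothesis $K_\alpha\in S_{\alpha,r'}$ is never used anywhere: it is precisely what handles this missing region. The remainder of your part (2) --- H\"older on each far annulus, the bound $\|f\|_{r,B(x,2^{m+1}R)}\lesssim (2^mR)^{-\alpha}M_{\alpha,r}f(x)$, and summation via $H_{\alpha,r'}$ after translating --- coincides with the paper's argument and is fine.

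Your part (1) is correct in outline but follows a different route from the paper's. The paper estimates the local piece pointwise at $x$ itself: using $S_{\alpha,r'}$ on annuli shrinking to $x$ it gets $|T_{\alpha}(f\chi_{3Q(x,s)})(x)|\le c\,s^{\alpha}M_{r}f(x)\to 0$ as $s\to 0$, with no weak-type input. You instead use the unweighted weak-type bound of Proposition \ref{debil} together with Lebesgue differentiation; made precise (e.g.\ via Kolmogorov's inequality, $\frac1{|Q|}\int_Q|T_{\alpha}(f\chi_{3Q})|\lesssim |Q|^{\alpha/n}\|f\|_{r,3Q}\le |Q|^{\alpha/n}M_rf(x)\to 0$ at points where $M_rf(x)<\infty$), this is the standard Lerner-type argument and it works; it trades the size condition for an a priori weak-type bound. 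You should flesh out the step you call ``averaging in $\xi$'' along these lines, but the strategy is sound.
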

\begin{proof}
\begin{enumerate}
\item Let $Q(x,s)$ a cube centered at $x$ with side length $s$ such that $Q(x,s)\subset Q_0$, then
\begin{align*}
|T_{\alpha}(f\chi_{3Q_0})(x)|&\leq
|T_{\alpha}(f\chi_{3Q(x,s)})(x)|+|T_{\alpha}(f\chi_{3Q_0\setminus3Q(x,s)})(x)|.
\end{align*}
For the first term, let us consider  $B(x,R)$ with $R=3\sqrt{n}s$
then $3Q(x,s)\subset B(x,R)$. As $K_\alpha \in S_{\alpha,r'}$ we
have

\begin{align*}
|T_{\alpha}(f\chi_{3Q(x,s)})(x)|& 
\leq \int_{B(x,R)}|K_{\alpha}(x-y)||f(y)|dy
\\&=\sum_{m=0}^{\infty}\frac{|B(x,2^{-m}R)|}{|B(x,2^{-m}R)|}\int_{B(x,2^{-m}R)} \chi_{B(x,2^{-m}R)\setminus B(x,2^{-m-1}R)}|K_{\alpha}(x-y)||f(y)|dy
\\&\leq \sum_{m=0}^{\infty}|B(x,2^{-m}R)| \|K_{\alpha}\|_{r',|x|\sim2^{-m-1}R}\|f\|_{r,B(x,2^{-m}R)}
\\&\leq c M_{r}(f)(x)\sum_{m=0}^{\infty}(2^{-m}R)^n(2^{-m}R)^{\alpha-n}
\\&=c M_{r}(f)(x)R^{\alpha}\sum_{m=0}^{\infty}(2^{-m})^{\alpha}
=c M_{r}(f)(x)R^{\alpha}.
\end{align*}

Then,
$$|T_{\alpha}(f\chi_{3Q_0})(x)|\leq c_n s^{\alpha}M_{r}f(x) + M_{T_{\alpha},Q_0}f(x).$$
Observe that by hypothesis $M_rf<\infty$, then letting $s\rightarrow
0$, we obtain the desired estimate.

\item Let $x\in \mathbb{R}^n$ and let $Q$ be a cube containing $x$. Let $B_x$ be a ball with radius $R$ such that $3Q\subset B_x$.
For every $\xi \in Q$, we have
\begin{align*}
|T_{\alpha}(f\chi_{\R\setminus 3Q})(\xi)|&\leq |T_{\alpha}(f\chi_{\R\setminus B_x})(\xi)-T_\alpha(f\chi_{\R\setminus B_x})(x)|
+|T_{\alpha}(f\chi_{B_x \setminus 3Q})(\xi)| + |T_{\alpha}(f\chi_{\R\setminus B_x})(x)|
\\&\lesssim  |T_{\alpha}(f\chi_{\R\setminus B_x})(\xi)-T_{\alpha}(f\chi_{\R\setminus B_x})(x)| + |T_{\alpha}(f\chi_{B_x \setminus 3Q})(\xi)|+
 \tilde T_{\alpha}(|f|)(x).
\end{align*}
For the first term, as $K_\alpha\in H_{\alpha,r'}$, we get
\begin{align*}
|T_{\alpha}(f\chi_{\R\setminus B_x})(\xi)-T_{\alpha}(f\chi_{\R\setminus B_x})(x)|&\leq \int_{\R\setminus B_x} |K_{\alpha}(\xi-y)-K_{\alpha}(x-y)||f(y)|dy
\\&= \sum_{m=1}^{\infty} \frac{|2^mB_x|}{|2^mB_x|}\int_{2^{m+1}B_x\setminus2^mB_x}|K_{\alpha}(\xi-y)-K_{\alpha}(x-y)||f(y)|dy
\\&\leq \sum_{m=1}^{\infty} (2^mR)^n \|K_{\alpha}(\xi-\cdot)-K_{\alpha}(x-\cdot)\|_{r',|y|\sim 2^mR}\|f\|_{r,2^{m+1}B_x}
\\&\leq \sum_{m=1}^{\infty} (2^mR)^{n-\alpha} \|K_{\alpha}(\xi-\cdot)-K_{\alpha}(x-\cdot)\|_{r',|y|\sim 2^mR} M_{\alpha,r}f(x)
\\&\leq c_r M_{\alpha,r}f(x).
\end{align*}
For the second term, observe that there exists $l\in \N$ such that
$B(x,2^{-l}R)\subset 3Q$, then, as  $K_\alpha\in
S_{\alpha,r'}$, we obtain
\begin{align*}
|T_{\alpha}(f\chi_{B_x \setminus 3Q})(\xi)|&\leq \int_{B_x \setminus 3Q} |K_{\alpha}(x-y)||f(y)|dy
\\&\leq \sum_{m=0}^{l-1}\int_{B(x,2^{-m}R) \setminus B(x,2^{-m-1}R)} |K_{\alpha}(x-y)||f(y)|dy
\\&\leq  \sum_{m=0}^{l-1}|B(x,2^{-m}R)| \|K_{\alpha}\|_{r',|x|\sim2^{-m-1}R}\|f\|_{r,B(x,2^{-m}R)}
\\&\leq  c \sum_{m=0}^{l-1}(2^{-m}R)^n(2^{-m}R)^{\alpha-n}\|f\|_{r,B(x,2^{-m}R)}
\\&\leq c M_{\alpha,r}f(x).
\end{align*}
Finally we get
\begin{align*}
|T_{\alpha}(f\chi_{\R\setminus 3Q})(\xi)| &\lesssim M_{\alpha,r}f(x)
+ \tilde T_{\alpha}(|f|)(x).
\end{align*}
\end{enumerate}
\end{proof}

The following lemma is the so called $3^n$ dyadic lattices trick.
This result was es\-ta\-bli\-shed in \cite{LN18} and affirms:
\begin{lema}\label{lemaDiadic}\cite{LN18}
Given a dyadic family $\mathcal{D}$ there exist $3^n$ dyadic families $\mathcal{D}_j$ such that
$$\{3Q:Q\in \mathcal{D}\}=\bigcup_{j=1}^{3^n}\mathcal{D}_j,$$
and for every cube $Q\in \mathcal{D}$ we can find a cube $R_Q$ in each $\mathcal{D}_j$ such that $Q\subset R_Q$ and $3l_Q=l_{R_Q}$.
\end{lema}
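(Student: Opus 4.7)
My plan is to construct the $3^n$ families explicitly as coordinate-wise products of three shifted dyadic lattices in dimension one, and then verify the partition identity and the containment property. The construction factors across coordinates: an $n$-fold Cartesian product of $1$-dimensional shifted families yields $3^n$ families in $\mathbb{R}^n$, and both conclusions reduce to their $1$-dimensional analogues (the partition because $3Q = 3Q_1\times\cdots\times 3Q_n$ with each factor in a unique $1$D subfamily, and the containment because $R_Q$ is the product of coordinate-wise $R_{Q_i}$, all at the same dyadic level). So I would focus the real work on the case $n=1$.

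In dimension one, for each $c_0\in\{0,1,2\}$ I would define shifts $c_k\equiv 2^k c_0\pmod 3$ (equivalently, the recursion $c_{k-1}\equiv 2c_k\pmod 3$; observe that $c_k$ has period $2$ in $k$ since $2^2\equiv 1\pmod 3$) and set
\[
\mathcal{D}^{(1)}_{c_0}:=\bigcup_{k\in\Z}\bigl\{\bigl[(3m+c_k)2^k,(3m+c_k+3)2^k\bigr]:m\in\Z\bigr\}.
\]
Each such interval has length $3\cdot 2^k$ and equals $3I$ for the middle-third $I=[(3m+c_k+1)2^k,(3m+c_k+2)2^k]\in\mathcal{D}$, so $\mathcal{D}^{(1)}_{c_0}\subset\{3I:I\in\mathcal{D}\}$. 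Conversely, any $3I$ with $I=[j2^k,(j+1)2^k]$ equals $[(j-1)2^k,(j+2)2^k]$ and lies in $\mathcal{D}^{(1)}_{c_0}$ precisely when $c_k\equiv j-1\pmod 3$; as $c_0$ runs over $\{0,1,2\}$ the value $c_k$ also runs over $\{0,1,2\}$ bijectively, so every $3I$ sits in exactly one of the three families, giving the decomposition.

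The key structural check is that each $\mathcal{D}^{(1)}_{c_0}$ is itself a dyadic lattice, i.e.\ that the level-$k$ tiling refines into the level-$(k{-}1)$ one. This is precisely what the recursion $c_{k-1}\equiv 2c_k\pmod 3$ is engineered to enforce: it makes every endpoint $(3m+c_k)2^k$ equal to some endpoint $(3m'+c_{k-1})2^{k-1}$, so each level-$k$ interval splits into two level-$(k{-}1)$ intervals of the family. For the containment property, the level-$k$ intervals of $\mathcal{D}^{(1)}_{c_0}$ tile $\mathbb{R}$ with length $3\cdot 2^k$ and boundaries on $2^k\Z$, so a given $Q\in\mathcal{D}$ of side $2^k$ sits in a unique such interval $R_Q$, and by construction $l_{R_Q}=3\cdot 2^k=3l_Q$. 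The main delicacy I expect is verifying the nestedness at every scale and confirming that the period-$2$ behaviour of the recursion really produces exactly three distinct lattices rather than collapsing or overcounting; once this arithmetic bookkeeping is in order, both conclusions in dimension one are mechanical, and the $\mathbb{R}^n$ case follows by taking products of the coordinate families.
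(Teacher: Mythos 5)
The paper offers no argument for this lemma at all: it is quoted from Lerner--Nazarov \cite{LN18}, so there is nothing internal to compare with. Your construction is, in substance, the standard one-third--shift construction that underlies the cited result, and the 1-dimensional bookkeeping checks out: since $2\equiv -1\pmod 3$ one has $c_k\equiv(-1)^k c_0\pmod 3$, the recursion $c_{k-1}\equiv 2c_k\pmod 3$ is exactly what makes every interval of length $3\cdot 2^k$ in $\mathcal{D}^{(1)}_{c_0}$ split into two members of the family at level $k-1$; every tripled dyadic interval $[(j-1)2^k,(j+2)2^k]$ lies in exactly one of the three families because $c_0\mapsto c_k$ is a bijection of $\{0,1,2\}$; and a dyadic interval of side $2^k$ cannot straddle a level-$k$ tile boundary (these boundaries lie in $2^k\mathbb{Z}$), so each of the three families contains a unique $R_Q\supset Q$ with $l_{R_Q}=3l_Q$. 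The reduction of the $n$-dimensional case to coordinatewise products is also fine, since $3Q=3Q_1\times\cdots\times 3Q_n$ and the parent--child structure acts coordinatewise. (A cosmetic point: use half-open intervals so the level-$k$ tiles are genuinely disjoint.)

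The one substantive caveat is scope. The lemma is stated for an arbitrary dyadic family $\mathcal{D}$, and in the paper's application $\mathcal{D}$ is generated from a cube $Q_0$ of arbitrary position and sidelength, not from the standard grid. Your explicit formulas, with left endpoints $(3m+c_k)2^k$, presuppose the standard lattice $\{[j2^k,(j+1)2^k]\}$ and its affine images; a general dyadic lattice has level-$k$ tilings whose shifts need not be globally consistent with any single translate of the standard grid, so the formulas do not apply verbatim. The fix is routine and does not change your idea: label the level-$k$ cubes of $\mathcal{D}$ by integer coordinates intrinsic to the lattice (fixing a base cube at each level, coherently across levels via the parent map), and run the same mod-3 classification with the same factor-2 recursion; alternatively, state explicitly that you first reduce to the standard grid when $\mathcal{D}$ is an affine image of it. As written, though, you have proved the statement only for one particular $\mathcal{D}$, and this should be addressed before the lemma can be invoked as stated.
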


\begin{proof}[Proof of Theorem \ref{Sparse}]
We claim that for any cube $Q_0 \in \R$, there exists a
$\frac1{2}\,$-sparse family $\mathcal{F}\subset \mathcal{D}(Q_0)$
such that for $a.e. x\in Q_0$,
\begin{equation}\label{claim}
|T_{\alpha}(f\chi_{3Q_0})(x)|\lesssim  \sum_{Q\in
\mathscr{F}}|3Q|^{\alpha/n}\|f\|_{r,3Q}\chi_{Q}(x).
\end{equation}
Suppose that we have already proved the claim (\ref{claim}). Let us take a partition of $\R$ by cubes $Q_j$ such
 that $\text{supp}(f)\subset 3Q_j$ for each $j$. We can do it as follows. We start with a cube $Q_0$ such
 that $\text{supp}(f)\subset Q_0$, and cover $3Q_0\setminus Q_0$ by $3^n - 1$ congruent cubes $Q_j$, each of
 them satisfies $Q_0\subset 3Q_j$. We do the same for
$9Q_0\setminus 3Q_0$ and so on. The union of all  those cubes will
satisfy the desired properties.

We apply the claim (\ref{claim}) to each cube $Q_j$. Then we have that since $\text{supp}(f)\subset 3Q_j$ the following estimate holds $a.e.\,x\in Q_j$
$$|T_{\alpha}f(x)|\chi_{Q_j}(x)=|T_{\alpha}(f\chi_{3Q_0})(x)|\lesssim  \sum_{Q\in \mathcal{F}_j}|3Q|^{\alpha/n}\|f\|_{r,3Q}\chi_{Q}(x),$$
 where each $\mathcal{F}_j\subset\mathcal{D}(Q_j)$ is a $\frac1{2}\,$-sparse
family. Taking $\mathcal{F}=\bigcup_j \mathcal{F}_j$, we have that
$\mathcal{F}$  is a $\frac1{2}\,$-sparse family and for $a.e. x\in
\R$
$$|T_{\alpha}f(x)|\lesssim  \sum_{Q\in \mathcal{F}}|3Q|^{\alpha/n}\|f\|_{r,3Q}\chi_{Q}(x).$$

From Lemma \ref{lemaDiadic} it follows that there exist $3^n$ dyadic
families such that for every cube $Q$ of $\R$ there is a cube
$R_Q\in \mathcal{D}_j$ for some $j$ for which $3Q\subset R_Q$ and
$|R_Q|\leq 3^n|3Q|$. Setting
$$\mathscr{S}_j=\{R_Q\in D_j : Q\in \mathcal{F}\},$$
and since $\mathcal{F}$ is a $\frac1{2}\,$-sparse, we obtain that
for each family $\mathscr{S}_j$ is $\frac1{2.9^n}$-sparse. Then we
have that
$$|T_{\alpha}f(x)|\lesssim  \sum_{j=1}^{3^n}\sum_{Q\in \mathscr{S}_j}|Q|^{\alpha/n}\|f\|_{r,Q}\chi_{Q}(x).$$

Proof of  claim (\ref{claim}).  To prove the claim  it is suffice to
show the following recursive estimate: there exists a countable
family $\{P_j\}_j$ of pairwise disjoint cube in $\mathcal{D}(Q_0)$
such that $\sum_j P_j\leq \frac1{2}|Q_0|$ and
\begin{equation}\label{claim2}
|T_{\alpha}(f\chi_{3Q_0})(x)|\chi_{Q_0}(x)\leq c
|3Q_0|^{\alpha/n}\|f\|_{r,3Q_0}\chi_{Q_0}(x) + \sum_j
|T_{\alpha}(f\chi_{3P_j})(x)|\chi_{P_j}(x),
\end{equation}
for $a.e.\,x\in Q_0$. Iterating this estimate we obtain
(\ref{claim})  with $\mathcal{F}$ being the union of all the
families $\{P_j^k\}$ where $\{P_j^0\}=\{Q_0\}$, $\{P_j^1\}=\{P_j\}$
and the $\{P_j^k\}$ are the cubes obtained at the $k$-th stage of
the iterative process. It is also clear that $\mathcal{F}$ is a
$\frac1{2}\,$-sparse family. Indeed, for each $P_j^k$ it is suffices
to choose
$$E_{P_j^k}=P_j^k\setminus \bigcup_j P_j^{k+1}.$$
Let us prove  the recursive estimate (\ref{claim2}). Observe that for any family  $\{P_j\}\subset \mathcal{D}(Q_0)$ of disjoint cubes, we have
\begin{align*}
|T_{\alpha}&(f\chi_{3Q_0})(x)|\chi_{Q_0}(x)\leq |T_{\alpha}(f\chi_{3Q_0})(x)|\chi_{Q_0\setminus\cup_j P_j}(x) + \sum_j |T_{\alpha}(f\chi_{3Q_0})(x)|\chi_{ P_j}(x)
\\&\leq |T_{\alpha}(f\chi_{3Q_0})(x)|\chi_{Q_0\setminus \cup_j P_j}(x) + \sum_j |T_{\alpha}(f\chi_{3Q_0\setminus 3P_j})(x)|\chi_{ P_j}(x) + \sum_j |T_{\alpha}(f\chi_{3P_j})(x)|\chi_{ P_j}(x),
\end{align*}
for almost every $x\in \R$. So it is suffice to show that we can
choose a countable family $\{P_j\}_j$ of pairwise disjoint cube in
$\mathcal{D}(Q_0)$ such that $\sum_j P_j\leq \frac1{2}|Q_0|$ and
for, $a.e. x\in Q_0$ we have,
\begin{equation}\label{claim3}
|T_{\alpha}(f\chi_{3Q_0})(x)|\chi_{Q_0\setminus \cup_j P_j}(x) + \sum_j |T_{\alpha}(f\chi_{3Q_0\setminus 3P_j})(x)|\chi_{ P_j}(x)\lesssim  |3Q_0|^{\alpha/n}\|f\|_{r,3Q_0}\chi_{Q_0}(x).
\end{equation}
Now we define the set $E$ as
\begin{align*}
E=
\{x\in Q_0 : M_{T_{\alpha},Q_0}f(x)> \beta_n c |3Q_0|^{\alpha/n}\|f\|_{r,3Q_0}\},
\end{align*}
 by Lemma \ref{LemaSparse} we can choose $\beta_n$ such that $|E|\leq
\frac1{2^{n+2}}|Q_0|$.


We apply Calder\'on-Zygmund decomposition to the function $\chi_E$
on $Q_0$ at height $\lambda =\frac1{2^{n+1}}$. Then, there exist a
family $\{P_j\}\subset \mathcal{D}(Q_0)$ of pairwise disjoint cubes
such that
$$\left\{x\in Q_0: \chi_E(x)> \frac1{2^{n+1}}\right\}
=\bigcup_j P_j.$$

From this it follows that $|E\setminus \cup_j P_j|=0$,
$$\sum_j |P_j|\leq 2^{n+1}|E|\leq \frac1{2}|Q_0|,$$
and
$$\frac1{2^{n+1}}\leq \frac{|P_j\cap E|}{|P_j|}\leq \frac1{2},$$
from which it follows that $|P_j\cap E^c|>0$.

Since $P_j\cap E^c\not= \emptyset$, we have
$M_{T_{\alpha},Q_0}(f)(x)\leq \beta_n c
|3Q_0|^{\alpha/n}\|f\|_{r,3Q_0}$ for some $x\in P_j$ and this
implies that
$$\underset{\xi\in P_j}{\text{ess}\sup}|T_\alpha(f\chi_{3Q_0\setminus 3P_j})(\xi)|\leq \beta_n c |3Q_0|^{\alpha/n}\|f\|_{r,3Q_0},$$
which allows us to control the second  term in (\ref{claim3}).

By (1) in Lemma \ref{LemaSparse}, for  $a.e. x\in Q_0$ we have
$$|T_{\alpha}(f\chi_{3Q_0})(x)|\chi_{Q_0\setminus \cup_j P_j}(x)\leq M_{T_{\alpha},Q_0}f(x)\chi_{Q_0\setminus \cup_j P_j}(x).$$
Since $|E\setminus \cup_j P_j|=0$ and by the definition of $E$, for
$a.e. x\in Q_0\setminus \cup_j P_j$ we obtain
$$M_{T_{\alpha},Q_0}(f)(x)\leq \beta_n c |3Q_0|^{\alpha/n}\|f\|_{r,3Q_0}.$$
Then, for $a.e. x\in Q_0\setminus \cup_j P_j$ we get
$$|T_{\alpha}(f\chi_{3Q_0})(x)|\leq \beta_n c |3Q_0|^{\alpha/n}\|f\|_{r,3Q_0}.$$
Thus we obtain the estimate in (\ref{claim3}).

\end{proof}

\section{Sharp bounds for norm inequality}

Since the sparse domination is a pointwise estimate, is it suffice
to prove Theorem \ref{CotaT} and Proposition \ref{CotaSharp} for the
sparse operator $A_{r,\mathscr{S}}^{\alpha}$ for any sparse family
$\mathscr{S}$.

\begin{teo}\label{cotasharp}Let $0\leq \alpha<n$, $1\leq r<p<n/\alpha$ and $1/q=1/p-\alpha/n$. If $w^r\in A_{\frac{p}{r},\frac{q}{r}}$, then
$$\|A_{r,\mathscr{S}}^{\alpha}f\|_{L^q(w^q)}\leq c_n [w^r]_{A_{p/r,q/r}}^{\max\left\{1-\frac{\alpha}{n},\frac{(p/r)'}{q}\left(1-\frac{\alpha r}{n}\right)\right\}} \|f\|_{L^p(w^p)},$$
this estimate is sharp in the following sense:

 If there exists a increasing function
$\Phi:[1,\infty)\rightarrow (0,\infty)$   such
that
$$\|A_{r,\mathscr{S}}^{\alpha}\|_{L^p(w^p)\rightarrow L^q(w^q)}\lesssim \Phi([w^r]_{A_{\frac{p}{r},\frac{q}{r}}}),$$
for all $w^r\in A_{\frac{p}{r},\frac{q}{r}}$, then
$$\Phi(t)\gtrsim
t^{\max\left\{1-\frac{\alpha}{n},\frac{(p/r)'}{q}\left(1-\frac{\alpha
r}{n}\right)\right\}}.$$
\end{teo}

\begin{obs}
The first approximation of this type for  the fractional integral,
using the sparse technique, appears in \cite{CU15}. In this paper
the author does not prove the sharpness of the constant. In the case
$r=1$, the appropriate  sparse operator  for the fractional integral
$I_\alpha$, we obtain the same sharp bound as in \cite{AMPRR17}. If
$\alpha=0$, we get the same sharp bound as in \cite{L16}.
\end{obs}

We consider the following sparse operator defined in \cite{FH18}, for $\mathscr{S}$ a sparse family, $0<s<\infty$ and $0<\beta\leq 1$

$$\tilde{A}_{s,\mathscr{S}}^{\beta}g(x)=\left(\sum_{Q\in\mathscr{S}} \left(|Q|^{-\beta}\int_Q g\right)^s\chi_{Q}(x)\right)^{1/s}.$$

\begin{teo}\cite{FH18} Let $1\leq r <p \leq q <\infty$, and $0<\beta \leq 1$. Let consider the weights $u,\sigma\in A_{\infty}$ .
The sparse operator $\tilde{A}_{s,\mathscr{S}}^{\beta}(\cdot \sigma)$ maps $L^p(\sigma)\rightarrow L^q(u)$ if and only if
the two-weight $A_{p,q}^{\beta}$-characteristic
$$[u,\sigma]_{A_{p,q}^{\beta}(\mathscr{S})}:=\underset{Q\in\mathscr{S}}{\sup}|Q|^{-\beta}u(Q)^{1/q}\sigma(Q)^{1/p'},$$
is finite, and in this case

$$1\leq \frac{\|\tilde{A}_{s,\mathscr{S}}^{\beta}(\cdot \sigma)\|_{L^p(\sigma)\rightarrow L^q(u)}}{[u,\sigma]_{A_{p,q}^{\beta}(\mathscr{S})}}\lesssim [\sigma]_{A_{\infty}}^{1/q}+[u]_{A_{\infty}}^{\frac1{s}-\frac1{p}}.$$
\end{teo}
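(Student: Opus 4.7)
Since $A_{r,\mathscr{S}}^{\alpha}f=\sum_Q|Q|^{\alpha/n}\|f\|_{r,Q}\chi_Q$ carries the local $L^r$-average $\|f\|_{r,Q}$ rather than an $L^1$-average, it is not itself a linear sparse form, and so the Frey--H\"anninen two-weight theorem quoted just above does not apply directly. My plan is to linearise it by dualising in $L^q(w^q)$ and applying a cube-wise H\"older inequality, and then to bound the resulting bilinear sparse form by Frey--H\"anninen, closing out with the standard reverse-H\"older conversion of $A_\infty$-constants into $A_{P,Q_0}$-constants.

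Set $P=p/r$, $Q_0=q/r$, $\tilde\alpha=r\alpha$, $\sigma=w^{-rP'}$ and $u=w^q$. The hypothesis $w^r\in A_{P,Q_0}$ reads $\sup_Q|Q|^{-1-Q_0/P'}u(Q)\sigma(Q)^{Q_0/P'}=[w^r]_{A_{P,Q_0}}$, and one has the balance $1/Q_0+1/P'=1-\tilde\alpha/n$. By duality in $L^q(u)$,
\[\|A_{r,\mathscr{S}}^{\alpha}f\|_{L^q(u)}=\sup\sum_{Q\in\mathscr{S}}|Q|^{\alpha/n}\|f\|_{r,Q}\int_Q g\,u,\]
with supremum over $\|g\|_{L^{q'}(u)}=1$. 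Rewriting $\int_Q f^r=\int_Q (fw)^r w^{-r}$ and using H\"older with exponents $(P,P')$ gives
\[\|f\|_{r,Q}\le |Q|^{-1/r}\sigma(Q)^{1/(rP')}\Bigl(\int_Q f^p w^p\Bigr)^{1/p},\]
so the task reduces to estimating
\[\Sigma=\sum_{Q\in\mathscr{S}}|Q|^{\alpha/n-1/r}\sigma(Q)^{1/(rP')}\Bigl(\int_Q f^p w^p\Bigr)^{1/p}\int_Q g\,u.\]

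From the balance identity one derives the key one-cube estimate
\[|Q|^{\alpha/n-1/r}\sigma(Q)^{1/(rP')}u(Q)^{1/q}\le [w^r]_{A_{P,Q_0}}^{1/q}\qquad(Q\in\mathscr{S}).\]
Inserting $u(Q)^{1/q}\cdot u(Q)^{-1/q}$ in each term of $\Sigma$ factors out a clean $[w^r]_{A_{P,Q_0}}^{1/q}$ and leaves a two-weight bilinear sparse form in $f^p w^p$ and $g\,u$ of exactly the type covered by Frey--H\"anninen (with a specific $s$ and $\beta$ read off from the remaining powers of $|Q|$, $\sigma(Q)$, $u(Q)$). That theorem yields
\[\Sigma\lesssim [w^r]_{A_{P,Q_0}}^{1/q}\bigl([\sigma]_{A_\infty}^{1/q}+[u]_{A_\infty}^{1/s-1/p}\bigr)\|f\|_{L^p(w^p)}\|g\|_{L^{q'}(u)}.\]
The standard reverse-H\"older inequalities on the $A_{P,Q_0}$-scale then convert $[u]_{A_\infty}$ and $[\sigma]_{A_\infty}$ into powers of $[w^r]_{A_{P,Q_0}}$. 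An elementary bookkeeping of the resulting total exponent shows it equals $1-\alpha/n$ when the $[\sigma]_{A_\infty}^{1/q}$ term dominates the FH bound, and $(P'/q)(1-r\alpha/n)$ when the $[u]_{A_\infty}^{1/s-1/p}$ term does; the announced $\max$ is exactly the asymmetry between these two FH contributions.

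For Proposition \ref{CotaSharp} I would follow the Lacey--Moen--P\'erez--Torres template of \cite{LMPT10}, constructing for each of the two regimes a family of radial power weights $w_\varepsilon(x)=|x|^{\gamma(\varepsilon)}$ paired with bumps $f_\varepsilon(x)=|x|^{\delta(\varepsilon)}\chi_{B(0,1)}$ whose parameters drive $[w_\varepsilon^r]_{A_{P,Q_0}}\to\infty$; direct computation of both sides of the inequality in Theorem \ref{cotasharp} shows that the exponent cannot be decreased in either regime. The main obstacle in the upper bound is the matching step: pinning down the exact $s$ and $\beta$ so that $\Sigma$ falls inside the hypothesis of Frey--H\"anninen after the factorisation above, and then tracking the reverse-H\"older conversion of $[u]_{A_\infty}$ and $[\sigma]_{A_\infty}$ without losing sharpness on either side of the final $\max$.
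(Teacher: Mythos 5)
The statement you were asked to address is the Frey--H\"anninen two-weight characterization itself, and your proposal does not prove it: you invoke that very theorem as a black box (``bound the resulting bilinear sparse form by Frey--H\"anninen'') in order to derive the weighted bound for $A_{r,\mathscr{S}}^{\alpha}$, which is Theorem \ref{cotasharp}, a different statement. Relative to the assigned task this is circular. A genuine proof would have to establish both directions of the characterization: the lower bound $[u,\sigma]_{A_{p,q}^{\beta}(\mathscr{S})}\leq\|\tilde{A}_{s,\mathscr{S}}^{\beta}(\cdot\,\sigma)\|_{L^p(\sigma)\rightarrow L^q(u)}$ by testing on $f=\chi_Q$ for $Q\in\mathscr{S}$, and the upper bound by estimating the sparse object $\bigl(\sum_{Q}(|Q|^{-\beta}\sigma(Q))^{s}\langle f\rangle_{Q,\sigma}^{s}\chi_Q\bigr)^{1/s}$ in $L^q(u)$, which is where the Fujii--Wilson constants $[\sigma]_{A_\infty}$ and $[u]_{A_\infty}$ enter, through Carleson-embedding estimates of the type $\sum_{Q\in\mathscr{S},\,Q\subset R}\sigma(Q)\lesssim[\sigma]_{A_\infty}\sigma(R)$. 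None of this appears in your write-up. For the record, the paper does not prove this theorem either; it is imported verbatim from \cite{FH18}, so there is no internal proof to compare against.

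As a secondary point: even read as a proof of Theorem \ref{cotasharp}, your starting premise that the Frey--H\"anninen theorem ``does not apply directly'' because $A_{r,\mathscr{S}}^{\alpha}$ carries an $L^r$-average is mistaken. The paper's argument is the one-line identity $A_{r,\mathscr{S}}^{\alpha}(f)=\bigl(\tilde{A}_{1/r,\mathscr{S}}^{1-\alpha/n}(f^r)\bigr)^{1/r}$, after which the cited theorem applies directly with $s=1/r$, $\beta=1-\alpha/n$, exponents $p/r\leq q/r$, $\sigma=w^{-(p/r)'r}$ and $u=w^q$; no dualization or cube-wise H\"older factorization is needed. Your alternative route leaves exactly the step you yourself flag as ``the main obstacle'' (identifying the $s$ and $\beta$ for which the residual bilinear form falls under the hypotheses of the quoted theorem) unresolved, so even for that theorem the argument is not complete.
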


\begin{proof}[Proof of Theorem \ref{cotasharp}]
Let $\sigma=w^{-(p/r)'r}$. Observe that
$$A_{r,\mathscr{S}}^{\alpha}(f)=\left(\tilde{A}_{1/r,\mathscr{S}}^{1-\alpha/n}(f^r)\right)^{1/r}.$$
Then,
\begin{align*}
\|A_{r,\mathscr{S}}^{\alpha}(f)\|_{L^q(w^q)}^r&=\|\tilde{A}_{1/r,\mathscr{S}}^{1-\alpha/n}(f^r)\|_{L^{q/r}(w^q)}=\|\tilde{A}_{1/r,\mathscr{S}}^{1-\alpha/n}(f^r\sigma^{-1}\sigma)\|_{L^{q/r}(w^q)}
\\&\lesssim [w^q,\sigma]_{A_{\frac{p}{r},\frac{q}{r}}^{1-\alpha/n}(\mathscr{S})}\left([\sigma]_{A_{\infty}}^{r/q}+[w^q]_{A_{\infty}}^{r-\frac{r}{p}}\right) \|f^r\sigma^{-1}\|_{L^{p/r}(\sigma)}.
\end{align*}
Now observe that,
$$[w^q,\sigma]_{A_{\frac{p}{r},\frac{q}{r}}^{1-\alpha/n}(\mathscr{S})}\leq [w^r]_{A_{p/r,q/r}}^{r/q},$$

and
$$\|f^r\sigma^{-1}\|_{L^{p/r}(\sigma)}=\|f\|_{L^{p}(w^p)}^r.$$
Since
$$[\sigma]_{A_{1+(p/r)'r/q}}=[w^r]_{A_{p/r,q/r}}^{(p/r)'r/q}\qquad [w^q]_{A_{1+\frac{q}{r(p/r)'}}}=[w^r]_{A_{p/r,q/r}},$$

we have
\begin{align*}
\|A_{r,\mathscr{S}}^{\alpha}(f)\|_{L^q(w^q)}&\lesssim  [w^r]_{A_{p/r,q/r}}^{1/q}\left([\sigma]_{A_{\infty}}^{r/q}+[w^q]_{A_{\infty}}^{r-\frac{r}{p}}\right)^{1/r} \|f\|_{L^{p}(w^p)}
\\&\leq  [w^r]_{A_{p/r,q/r}}^{1/q}\left([w^r]_{A_{p/r,q/r}}^{(p/r)'(r/q)^2}+[w^r]_{A_{p/r,q/r}}^{\frac{r}{p'}}\right)^{1/r} \|f\|_{L^{p}(w^p)}
\\&\leq  [w^r]_{A_{p/r,q/r}}^{1/q+\max\{(p/r)'r/q^2,\frac1{p'}\}} \|f\|_{L^{p}(w^p)}
\\&\leq  [w^r]_{A_{p/r,q/r}}^{\max\{(1-\frac{\alpha r}{n})(p/r)'/q,1-\alpha/n\}}
\|f\|_{L^{p}(w^p)},
\end{align*}
where the last inequality holds  since
$(1+(p/r)'r/q)=(1-\frac{\alpha r}{n})(p/r)'$ and
$1/q+1/p'=1-\alpha/n$.

\end{proof}

\section{Examples}
\subsection{Sparse operator $A_{r,\mathscr{S}}^{\alpha}$}
 In this subsection, we prove the sharpness of Theorem
\ref{cotasharp}.

\begin{proof}
Let $A=A_{r,\mathscr{S}}^{\alpha}$ the sparse operator. Let
$0<\varepsilon<1$. If
$$w_{\varepsilon}(x)=|x|^{\frac{n-\varepsilon}{r(p/r)'}} \qquad \text{ and }\quad  f(x)=|x|^{\frac{\varepsilon - n}{r}}\chi_{B(0,1)},$$
then
$$[w_{\varepsilon}^r]_{A_{\frac{p}{r},\frac{q}{r}}}\simeq \varepsilon^{-\frac{q}{r(p/r)'}} \qquad \text{ and }\quad  \|fw_{\varepsilon}\|_{L^p}\simeq \varepsilon^{-1/p}.$$

Let $\{Q_k\}$ the cube of center $0$ and length $2^{-k}$ and observe
that $B(0,1)\subset Q_0$.  This family  $\{Q_k\}$ is a
$\frac1{2}\,$-sparse family  with  $E_{Q_k}=Q_k\setminus Q_{k+1}$. 

Now, if $x\in E_{Q_k}$, $k\in \N$
\begin{align*} Af(x)&\geq
|Q_{k}|^{\alpha/n-1/r}\left(\int_{Q_k}|y|^{{\varepsilon -
n}}\right)^{1/r}
\gtrsim (2^{-kn})^{\alpha/n-1/r}\left(\frac{2^{-k\varepsilon}}{\varepsilon}\right)^{1/r}
\gtrsim \varepsilon^{-1/r} |x|^{\alpha-n/r+\varepsilon/r}.
\end{align*}

Therefore,
\begin{align*}
\int Af^q w_{\varepsilon}^q &\geq \sum_{k=1}^{\infty} \int_{E_{Q_k}}Af^q w_{\varepsilon}^q
\\&\gtrsim  \varepsilon^{-q/r} \int_{B(0,\frac1{2})} |x|^{q(\alpha-n/r+\varepsilon/r)+q\frac{n-\varepsilon}{r(p/r)'}}dx
\\&\simeq   \varepsilon^{-q/r-1},
\end{align*}
since $q(\alpha/n-1/r+\varepsilon/r)+q\frac{n-\varepsilon}{r(p/r)'}\leq
\varepsilon q/p -n$. Then

$$\varepsilon^{-\frac1{r(p/r)'}-1/q}\lesssim \varepsilon^{-1/r-1/q+1/p}\lesssim \|A\|_{L^p(w_{\varepsilon}^p)\rightarrow L^q(w_{\varepsilon}^q)}\lesssim \Phi(\varepsilon^{-\frac{q}{r(p/r)'}}).$$

Now, take $t=\varepsilon^{-\frac{q}{r(p/r)'}}$,
$$\Phi(t)\gtrsim t^{(p/r)'1/q(1-\alpha r/n)}.$$
Let $0<\varepsilon<1$. If
$$w_{\varepsilon}(x)=|x|^{\frac{\varepsilon-n}{q}}\qquad \text{ and }\quad f(x)=|x|^{\frac{\varepsilon - n}{r}}\chi_{B(0,1)},$$
then
$$[w_{\varepsilon}^r]_{A_{\frac{p}{r},\frac{q}{r}}}\simeq \varepsilon^{-1} \qquad \text{ and }\quad \|fw_{\varepsilon}\|_{L^p}\lesssim \varepsilon^{-1/p}.$$

Since $1/r+1/q=1/r-\alpha/n+1/p\geq 1/p,$
\begin{align*}
\int f^pw_{\varepsilon}^p&=\int_{B(0,1)} |x|^{\frac{\varepsilon - n}{r}p+\frac{\varepsilon-n}{q}p}=\int_{B(0,1)} |x|^{p\left(\varepsilon(1/r+1/q)-n(1/r+1/q)\right)}
\\&\leq \int_{B(0,1)} |x|^{p\left(\varepsilon(1/r+1/q)-n/p\right)}\simeq
\varepsilon^{-1}.
\end{align*}

Now, if $x\in Q_0$,
\begin{align*}
Af(x)&\geq |Q_{0}|^{\alpha/n-1/r}\left(\int_{Q_0}|y|^{{\varepsilon -
n}}\right)^{1/r}
\gtrsim \left(\frac1{\varepsilon}\right)^{1/r}\simeq
\varepsilon^{-1/r}\gtrsim \varepsilon^{-1}.
\end{align*}
Then, since $B(0,1)\subset Q_0$
\begin{align*}
\int Af^q w_{\varepsilon}^q &\gtrsim  \varepsilon^{-q} \int_{B(0,1)}
|x|^{\varepsilon-n}dx \gtrsim \varepsilon^{-q-1},
\end{align*}
then,
$$\varepsilon^{-1-1/q}\lesssim \|Af\|_{L^q(w_{\varepsilon}^q)}\lesssim \Phi(\varepsilon^{-1})\|f\|_{L^p(w_{\varepsilon}^p)}\lesssim \Phi(\varepsilon^{-1})\varepsilon^{-1/p}.$$
Now, if we take  $t=\varepsilon^{-1}$ then
$$t^{1-\alpha/n}\lesssim\Phi(t).$$
\end{proof}

\subsection{An operator $T_\alpha$}
 In this subsection, we give an example of an operator to proof the sharpness of the Proposition \ref{CotaSharp}.

\begin{proof}[Proof of Proposition 1.9]  
 Let $0<\alpha<1$,  $\beta>\max\{0, q/r' -1\}$, $1<r<p<1/\alpha$, $\frac1{q}=\frac{1}p-\alpha$ and $\frac1{r}+\frac1{r'}=1$.
Let us consider
$$k(t)=\left(\frac1{t\log(e/t)^{1+\beta}}\right)^{1/r'}\chi_{(0,1)}(t).$$
As shown in  \cite{IFRi18} and \cite{MPTG05}, we know that $k\in
H_{r'}\cap S_{r'}$. Now, let
$$K_{\alpha}(t)=|t+4|^{\alpha}k(|t+4|),$$
by Proposition 4.1 in \cite{BLR11}, $K_{\alpha}\in H_{\alpha,r'}\cap
S_{\alpha,r'}$. Let us consider  $T_\alpha f=K_\alpha*f$.

Observe that there exists $0<t_0<1$ such that $k$ is decreasing in
$(0,t_0)$.


Let $0<\varepsilon<1$. If
$w_{\varepsilon}(x)=|x|^{\frac{1-\varepsilon}{r(p/r)'}}\text{ and }
f(x)=|x+4|^{\frac{\varepsilon}{r} - 1}\chi_{(-5,-3)}(x),$ then
$$[w_{\varepsilon}^r]_{A_{\frac{p}{r},\frac{q}{r}}}\simeq
\varepsilon^{-\frac{q}{r(p/r)'}} \quad \text{ and } \quad
\|fw_{\varepsilon}\|_{L^p}\lesssim \varepsilon^{-1/p}.$$

Observe that if $|x-y|\leq 1$ and $|y|\leq 1$ then $|x|\leq2$ and
supp$(Tf)\subset [-2,2]$.  Let $x \in \;$supp$(Tf)$, 
$|x-y|\leq 1$,
and $0\leq|y|\leq |x|/2$ then $\frac{1}{2}|x|\leq |x-y|
\leq\frac{3}{2}|x|$ and $|x-y|^{\alpha}\gtrsim |x|^{\alpha}$.

For $|x|\leq \frac{2}{3}t_0\leq 2$, since $k$ is decreasing in $(0,t_0)$ we have
\begin{align*}
T_\alpha f(x)&
 \geq \int_{|y|\leq |x|/2}
|x-y|^{\alpha-1/r'}\left(\frac1{\log(e/|x-y|)}\right)^{\frac{1+\beta}{r'}}\chi_{(0,1)}(|x-y|)
|y|^{\frac{\varepsilon}{r} - 1}dy
\\& \gtrsim |x|^{\alpha}k\left(\frac{3}{2}|x|\right) \int_{|y|\leq |x|/2}|y|^{\frac{\varepsilon}{r} - 1}dy
\\&\gtrsim
\varepsilon^{-1}|x|^{\alpha+\frac{\varepsilon}{r}}k\left(\frac{3}{2}|x|\right).
\end{align*}
Then, using $\log(t)<\frac{t^{\varepsilon}}{\varepsilon}$ for $\varepsilon>0$ and $t>1$, we get
\begin{align*}
\int_{\mathbb{R}} |Tf(x)|^qw_{\varepsilon}^q(x)dx&\gtrsim
\varepsilon^{-q}\int_{|x|\leq \frac{2}{3}t_0}
|x|^{q\left(\alpha+\frac{\varepsilon}{r}\right)} k\left(\frac{3}{2}|x|\right)^q |x|^{q\left(\frac{1-\varepsilon}{r(p/r)'}\right)}dx
\\& = \varepsilon^{-q}\int_{|x|\leq  \frac{2}{3}t_0} k\left(\frac{3}{2}|x|\right)^q
|x|^{\frac{q}{r}-1}|x|^{\varepsilon\frac{q}{p}}dx
\\&  \gtrsim \varepsilon^{-q}\int_{|x|\leq \frac{2}{3}t_0} \left(\frac{|x|^{\varepsilon}}{\varepsilon}\right)^{\frac{q}{r'}(1+\beta)}
|x|^{\frac{q}{r}+\varepsilon\frac{q}{p}-1-\frac{q}{r'}}dx
\\&\gtrsim \varepsilon^{-q-1}\int_{|x|\leq  \frac{2}{3}t_0}
|x|^{\frac{q}{r}-\frac{q}{r'}+\varepsilon\left(
\frac{q}{r'}(1+\beta)+\frac{q}{p}\right)-1}dx,
\end{align*}
 the last inequality holds since $\beta>\max\{0, q/r' -1\}$.

If  $r\geq2$,
$$\frac{q}{r}-\frac{q}{r'}=q\left(\frac1{r}-\frac1{r'}\right)\leq 0.$$
Then
\begin{align*}
\int_{\mathbb{R}} |Tf(x)|^qw_{\varepsilon}^q(x)dx&
\gtrsim \varepsilon^{-q-1}\int_{|x|\leq  \frac{2}{3}t_0}
|x|^{\frac{q}{r}-\frac{q}{r'}+\varepsilon\left( \frac{q}{r'}(1+\beta)+\frac{q}{p}\right)-1}dx
\\&\gtrsim \varepsilon^{-q-1}\int_{|x|\leq  \frac{2}{3}t_0}
|x|^{\varepsilon\left( \frac{q}{r'}(1+\beta)+\frac{q}{p}\right)-1}dx
\\&\gtrsim \varepsilon^{-q-2}\geq \varepsilon^{-q-1}.
\end{align*}

If  $r<2$,
$$\frac{q}{r}-\frac{q}{r'}=q\left(\frac1{r}-\frac1{r'}\right)> 0.$$
Then
\begin{align*}
\int_{\mathbb{R}} |Tf(x)|^qw_{\varepsilon}^q(x)dx&
\gtrsim \varepsilon^{-q-1}\int_{|x|\leq  \frac{2}{3}t_0}
|x|^{\frac{q}{r}-\frac{q}{r'}+\varepsilon\left( \frac{q}{r'}(1+\beta)+\frac{q}{p}\right)-1}dx
\\&\gtrsim \varepsilon^{-q-1}\frac{t_0^{\frac{q}{r}-\frac{q}{r'}+\varepsilon\left( \frac{q}{r'}(1+\beta)+\frac{q}{p}\right)}}{\frac{q}{r}-\frac{q}{r'}+\varepsilon\left( \frac{q}{r'}(1+\beta)+\frac{q}{p}\right)}
\\&\gtrsim \varepsilon^{-q-1}.
\end{align*}
 Then, for $1<r<\infty$ we obtain
$$\int_{\mathbb{R}} |Tf(x)|^qw_{\varepsilon}^q(x)dx\gtrsim \varepsilon^{-q-1}.$$
Therefore,
$$\|Tf\|_{L^q(w_{\varepsilon}^{q})}\gtrsim \varepsilon^{-1-1/q}.$$
Then,
$$\varepsilon^{-\frac1{r(p/r)'}-1/q}\lesssim \varepsilon^{-1-1/q+1/p}\lesssim \|T\|_{L^p(w_{\varepsilon}^p)\rightarrow L^q(w_{\varepsilon}^q)}\lesssim \Phi(\varepsilon^{-\frac{q}{r(p/r)'}}).$$
and
$$\Phi(t)\gtrsim t^{(p/r)'1/q(1-\alpha r)}.$$
Let $0<\varepsilon<1$. If $
w_{\varepsilon}(x)=|x|^{\frac{\varepsilon-1}{q}} \text{ and }
f(x)=|x+4|^{\varepsilon /r- 1}\chi_{(-5,-3)},$ then
$$[w_{\varepsilon}^r]_{A_{\frac{p}{r},\frac{q}{r}}}\simeq
\varepsilon^{-1} \quad  \text{ and }\quad
\|fw_{\varepsilon}\|_{L^p}\lesssim \varepsilon^{-1/p}.$$ In an
analoguous way we obtain for $0<|x|<\frac{2}{3}t_0$, $Tf(x)\gtrsim
\varepsilon^{-1}|x|^{\alpha+\frac{\varepsilon}{r}}k\left(\frac{3}{2}|x|\right)$
and
$$\|Tf\|_{L^q(w_{\varepsilon}^{q})}\gtrsim \varepsilon^{-1-1/q}.$$
Hence
$$t^{1-\alpha/n}\lesssim\Phi(t).$$
\end{proof}


\end{document}